\title{The $a$-number, $p$-rank and Cartier points of genus $4$ curves }
\author{Catalina Camacho-Navarro   \thanks{Universidad de Costa Rica anacatalina.camacho@ucr.ac.cr } \\
	\\ 
}
\date{}
\newtheorem{thm}{Theorem}[section]
\newtheorem{prop}[thm]{Proposition}
\newtheorem{lem}[thm]{Lemma}
\newtheorem{cor}[thm]{Corollary}
\newtheorem{algo}[thm]{Algorithm}
\theoremstyle{definition}
\newtheorem{definition}[thm]{Definition}
\newtheorem{example}[thm]{Example}
\newtheorem{notation}[thm]{Notation}
\theoremstyle{remark}
\newcommand{\eref}[1]{\eqref{#1}}        
\newcommand{\fref}[1]{Figure~\ref{#1}}   
\newcommand{\cref}[1]{Chapter~\ref{#1}}  
\newcommand{\sref}[1]{Section~\ref{#1}}  
\newcommand{\aref}[1]{Appendix~\ref{#1}} 
\newcommand{\tref}[1]{Table~\ref{#1}}    
\newcommand{\N}{\mathbb{N}}
\newcommand{\A}{\mathbb{A}}
\newcommand{\PP}{\mathbb{P}}
\newcommand{\F}{\mathbb{F}}
\newcommand{\Fbar}{\overline{\F}}
\newcommand{\kbar}{\overline{k}}
\newcommand{\Kbar}{\overline{K}}
\newcommand{\R}{\mathbb{R}}
\newcommand{\Z}{\mathbb{Z}}
\newcommand{\ds}{\displaystyle}
\newcommand{\calA}{\mathcal{A}}
\newcommand{\calF}{\mathcal{F}}
\newcommand{\calM}{\mathcal{M}}
\newcommand{\calC}{\mathcal{C}}
\newcommand{\calO}{\mathcal{O}}
\newcommand{\Car}{\mathscr{C}}
\newcommand{\HZOX}{H^0(X,\Omega_X^1)}
\newcommand{\HOOX}{H^1(X,\calO_X)}
\newcommand{\rk}{\text{rk}}
\newcommand{\rank}{\text{rank}}
\newcommand{\Hom}{\text{Hom}}
\newcommand{\Gal}{\text{Gal}}
\newcommand{\GL}{\text{GL}}
\newcommand{\Spec}{\text{Spec}}
\newcommand{\Div}{\text{Div}}
\newcommand{\dv}{\text{div]}}
\newcommand{\Fr}{\text{Fr}}
\newcommand{\Vr}{\text{Vr}}
\newcommand{\End}{\text{End}}
\newcommand{\cha}{\text{char}}
\newcommand{\Aut}{\text{Aut}}
\newcommand{\Zprof}{\hat{\Z}}
\newcommand{\Jac}{\text{Jac}}
\newcommand{\avec}{{\vec{a}}}
\newcommand{\calB}{\mathcal{B}}
\newcommand{\bv}{\mathbf{v}}
\newcommand{\bw}{\mathbf{w}}
\newcommand{\magma}{\texttt{Magma}}
\newcommand{\Ni}[1]{\mathbf{N1i}_{#1}}
\newcommand{\Nii}[1]{\mathbf{N1ii}_{#1}}
\newcommand{\Ntwo}[1]{\mathbf{N2}_{#1}}
\newcommand{\Ds}[1]{\mathbf{D}_{#1}}
\newcommand{\defi}[1]{\textsf{#1}} 
\begin{document}

	\maketitle
	
	\begin{abstract}
		We study genus $4$ curves over finite fields and two invariants of the $p$-torsion part of their Jacobians: the $a$-number ($a$) and $p$-rank ($f$). We collect and analyze statistical data of curves over $\F_p$ for $p=3,5,7,11$ and their invariants. Then, we study the existence of Cartier points, which are also related to the structure of $J[p]$. For curves with $0\leq a<g$, the number of Cartier points is bounded, and it depends on $a$ and $f$.
		
	\end{abstract}
	\textbf{Keywords:} cartier points, genus 4, a-number, p-rank, Hasse-Witt Matrix.
	
	\section{Introduction}\label{section:intro}
	Let $X$ be a smooth projective genus $g$ curve over a field $k$ of characteristic $p$. The Torelli map associates $X$ with its Jacobian $J_X$, a principally polarized abelian variety of dimension $g$. The map embeds the moduli space $\calM_g$ of curves of genus $g$ into $\calA_g$, the moduli space of principally polarized abelian varieties of dimension $g$ over $k$. In consequence, it allows us to study the stratification of $\calM_g$ by looking at the group scheme structure of $J_X[p]$, the $p$-torsion part of the Jacobian. This is called the Ekedahl--Oort stratification. 
	For $g=2,3$, the Torelli locus is open and dense in $\calA_g$. For $p\ge 3$ and $g\leq 3$ this can be used to show that  all Ekedahl--Oort types occur for the Jacobians of smooth curves $X/\overline{\F}_p$ (\cite{Pries_sh}). The same is not known for $g\geq 4$. 
	
	Motivated by this and other similar open questions related to the $p$-torsion part of the Jacobian, we study smooth irreducible curves with $g=4$. We focus on the non-hyperelliptic kind. In particular, we look at the $a$-number and the $p$-rank, which are two invariants of $J_X[p]$. 
	
	In order to obtain a database of smooth, irreducible, genus $4$ non-hyperelliptic curves, we restrict our analysis to what we define as curves in \textit{standard form}. Recall that if $X$ is a curve with the above properties, then it has a model given by the zero locus of a quadratic and a cubic homogeneous polynomials in $k[x,y,z,w]$. Kudo and Harashita show in \cite{Kudo&Harashita} that under some assumptions, the defining equations can be simplified to reduce the number of cases. The curves given by these simplified equations are \textit{curves in standard form}. 
	
	We gather a statistical sample of curves in standard form defined over $\F_p$ for $p\in \{3,5,7,11\}$. For each of them we find the Hasse--Witt matrix $H$ and use it to compute the $a$-number and $p$-rank: the $a$-number is $g-\rank(H)$ and the $p$-rank is $f=\rank \left(HH^{(p)}\cdots H^{\left(p^{g-1}\right)}\right)$.  As one should expect, the majority of curves appear in the sample are ordinary, and the percentages decrease as the $a$-number increases (or similarly, as the $p$-rank decreases).
	
	We also explore in this paper the concept of Cartier point. We say that $P\in X(\overline{k})$ is a Cartier point if the hyperplane of regular differentials of $X$ vanishing at $P$ is stable under the Cartier operator. Baker introduces the definition in \cite{Baker} and remarks that they are related to the $p$-torsion points of the Jacobian.
	
	If $X$ has $a$-number $0\leq a<g$ then there is an upper bound on the number of Cartier points of $X$ given by Baker \cite{Baker}. When $a\ne 0$,  we classify Cartier points in Type 1 and Type 2 (see Definition~\ref{def:T1andT2}). The maximum number of Type 1 points depends on the $a$-number and the maximum number of Type 2 points depends on the $p$-rank. We are interested in determining the conditions under which these bounds are attained when $X$ is non-ordinary. Therefore, we develop algorithms to find all of the Cartier points on curves in standard form and apply them to our database. 
	
	The Cartier points are particularly interesting when $a=g-1$, because we can assign multiplicity to each of them. This is why we later focus on curves with $a=3$. We find all of the curves in standard form with $a=3$, defined over $\F_p$ for $p=3,5$ and a subset of them over $\F_7$. We explore the possible degrees and multiplicity distributions of these points.

	Here are some of the most relevant conclusions from our work, concerning non-hyperelliptic genus $4$ curves in standard form:
	
	\begin{enumerate}
		\item In our smooth sample, the are no curves with $(a,f)=(1,0)$ over $\F_p$ for $p\in \{3,5,7,11\}$. (Corollary~\ref{cor:no_a1_f0_curves}).
		\item There are, up to $\F_3$-isomorphism, exactly 27 curves with $a$-number $3$ over $\F_3$ in standard form. All of them have $p$-rank $1$. (Corollaries~\ref{cor:exhaustive_search_p3} and~\ref{cor:no_prank0_p3_curves}).
		\item There are, up to $\F_5$-isomorphism, exactly $134$ curves with  $a=3$ over $\F_5$. (Corollary~\ref{cor:exhaustive_search_p5}).

		\item 	In our smooth sample, no curve with $a$-number $2$ reaches the bound of $6$ Type $1$ Cartier points. Moreover, the maximum number of Type $1$ points attained on curves with $a$-number $2$ is $3$ for $p\in \{5,7,11\}$ and $2$ for $p=3$. (Corollary~\ref{cor:no_a2_curves_reach_UB_of_T1}).

		\item	In our smooth sample no curve with $p$-rank $2$ or $3$ reaches the bound of $6$ Type $2$ Cartier points. The maximum number of points that occurs is $3$ and $4$, respectively. (Corollary~\ref{cor:no_f2_curves_reach_UB_of_T2}).

		\item When $a=3$, the bound on Type 1 points is sharp for $p\in \{5,7,11\}$ and the total bound for both Types is sharp for $p=7$. (Corollaries~\ref{cor:summary_T1_bounds} and~\ref{cor:bounds_reached_p7}).
		\item There are no curves in standard form over $\F_3$ with $a=3$ that attain either of the upper bounds for Cartier points. (Lemma~\ref{lem:T1_points_in_a3_p3_curves}).
		
	\end{enumerate}
	
	\section{Acknowledgements}
	This paper is part of my doctoral dissertation. 
	The work presented here would not have been possible without the support of my advisor Rachel Pries. I would like to thank Dr. Pries for suggesting this topic and sharing her knowledge throughout my time in graduate school. 
	I was financially supported by University of Costa Rica while I worked on this project.
	
	\section{Preliminaries}\label{sec:preliminaries}
	This section includes the background information related to the Cartier operator, Hasse--Witt matrix, $p$-rank and $a$-number of a curve. Unless otherwise stated, $p$ will be an odd prime number and $k$ a perfect field of characteristic $p$. 
	
	\subsection{The $p$-rank  }\label{subsec:anumber_prank}
	
	Let $X$ be a smooth irreducible genus $g$ curve over $k$. 
	Denote by $J_X$ the Jacobian variety associated to $X$. Then $J_X$ is an abelian variety of dimension $g$  isomorphic to $\text{Pic}^0_{X/k}$, equipped with a principal polarization. 
	
	The $p$-torsion part of the Jacobian, denoted by $J_X[p]$, is a group scheme and here we will study two invariants associated to it.  The first one is the $p$-rank, defined as the integer $f$ such that $\# J_X[p](k)=p^f$. The $p$-torsion subgroup of an abelian variety of dimension $g$ has order at most $g$, hence $0\leq f\leq g$.

	\subsection{Cartier and Frobenius operators and the $a$-number}\label{subsec:CartierFrob}

	Suppose that $x$ is a separating variable of $k(X)/k$, then every $t \in k(X)$ can be written as
	\begin{align}\label{eq:f_inK(X)}
	t=t_0^p+t_1^px+\ldots +t_{p-1}^px^{p-1},
	\end{align}
	with $t_i \in k(X)$.
	\begin{definition}\label{def:cartier_operator}
		The Cartier operator $\calC$ is defined on $\Omega_X^1$ for $t$ as above by
		\begin{align}
		\calC(tdx)=t_{p-1}dx.
		\end{align}
		
	\end{definition}
	The Cartier operator is $1/p$-linear, meaning that $\calC(a^p\omega_1+b^p\omega_2)=a\calC(\omega_1)+b\calC(\omega_2)$, for $a,b$ in $k(X)$ and $\omega_1, \omega_2 \in \Omega_X^1$.  It induces a well defined map $\calC$ on the $k$-vector space of regular differentials $H^0(X,\Omega_X^1)$. 

	\begin{definition}\label{def:CartierMatrix}
		If $B=\left\lbrace\omega_1, \ldots, \omega_g\right\rbrace$ is a $k$-basis for $H^0(X,\Omega_X^1)$ and $\calC(\omega_j)=\sum_{i=1}^g{c_{ij}\omega_i}$ then the Cartier--Manin matrix of $X$ with respect to this basis is the matrix $(c_{ij}^p)_{ij}$. 
	\end{definition}

	\begin{definition}\label{def:frobenius_in H1}
		The absolute Frobenius of $X$ is the morphism $\calF:X\to X$ given by the identity on the underlaying topological space and $t\mapsto t^p$ on $\calO_X$. Let $\calF_X$ be the induced endomorphism in $H^1(X,\calO_X)$. We call $\calF_X$ the Frobenius endomorphism.
	\end{definition}
	The Frobenius endomorphism is $p$-linear, that is $\calF_X(a\xi)=a^p\calF_X(\xi)$ for all $a\in k$ and all $\xi \in H^1(X,\calO_X)$. 
	
	\begin{definition}\label{def:HW_matrix}
		Let $B'=\{\xi_1, \ldots, \xi_g\}$ be a $k$-basis of $H^1(X,\calO_X)$ and $\calF_X(\xi_j)=\sum_{i=1}^{g}a_{ij}\xi_i$ for some $a_{ij}\in k$. Then the Hasse--Witt matrix of $X$ with respect to $B'$ is the matrix $(a_{ij})_{ij}$.	
	\end{definition}

	The space $H^1(X,\calO_X)$ is the dual of $H^0(X,\Omega_X^1)$  and there is a perfect paring $<,>$ on $H^1(X,\calO_X)\times H^0(X,\Omega_X^1)$ such that $<\calF_X\xi,\omega>=<\xi,\calC\omega>^p$.
	
	When $B$ and $B'$ are dual basis,the Cartier--Manin matrix $M$ with respect to $B$ is the transpose of the Hasse--Witt matrix $H$ with respect to $B'$.

	By Serre \cite{serre1972}, the $p$-rank $f$ is the stable rank of the Frobenius and since this operator is $p$-linear, it implies that
	\begin{equation}\label{eq:p-rank-stablerank}
	f=\rank \left(HH^{(p)}\cdots H^{\left(p^{g-1}\right)}\right),
	\end{equation}
	where $H^{(i)}$ is the matrix obtained by raising every entry of $H$ to the $i$-th power. 
	
	The second invariant of $J_X[p]$ that we study here is the $a$-number, defined as the rank of the Cartier operator (see Oort \cite{oort_li}), i.e.,
	\begin{equation}\label{eq:a_number_rank}
	a=g-\rank(H)=g-\rank(M).
	\end{equation}
	It is known that $0\leq a+f \leq g$. Generically $f=g$, in which case $X$ is said to be \textit{ordinary}. 
	The other extreme case is when $X$ is \textit{superspecial} and it occurs when $a=g$ or equivalently, when the Cartier operator is identically $0$ on $H^0(X,\Omega_X^1)$.

	In Section~\ref{subsec:HWmatrix_genus4} we follow the work from \cite{Kudo&Harashita} to describe how to explicitly find the Hasse--Witt matrix, when $X$ is the complete intersection of two homogeneous polynomials and use it to compute the $p$-rank and $a$-number.

	\subsection{Previous results}\label{subsec:previous_results}
	In this section we review some of the main and more recent results with respect to genus $g$ curves of positive characteristic and possible values for the $a$-number and $p$-rank that occur. 
	
	\begin{thm}[Ekedahl \cite{Ekedahl:Supersingular},Theorem 1.1]\label{thm:Ekedahl}
		Let $X$ be a smooth curve of genus $g$ over an algebraically closed field $k$ of characteristic $p>0$. If $X$ is superspecial then 
		\begin{enumerate}
			\item $g\leq \frac{1}{2}(p^2-p)$ and
			\item $g\leq \frac{1}{2}(p-1) $ if $X$ is hyperelliptic and $(p,g)\neq (2,1).$
		\end{enumerate}
	\end{thm}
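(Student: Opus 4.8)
The plan is to deduce everything from the single fact recorded in the preliminaries: $X$ is superspecial precisely when $a=g$, i.e. when the Cartier operator $\calC$ vanishes identically on $\HZOX$ (equivalently, the Hasse--Witt matrix $H$ is the zero matrix). Both inequalities then become statements about how strong a constraint the identity $\calC\equiv 0$ imposes on a $g$-dimensional space of regular differentials, and the two parts differ only in how much extra structure --- the explicit hyperelliptic model in (2), the general canonical system in (1) --- is available to exploit.

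The first step I would record is a local reduction. Since $\calC(\omega)=0$ for every regular $\omega$, and the kernel of the Cartier operator on all differentials consists exactly of the exact ones, each regular differential is exact, $\omega=dh$. In a local coordinate $t$ at any point $P$, writing $\omega=\left(\sum_{n\ge 0}a_nt^n\right)dt$, one has $\calC(\omega)=\sum_{m\ge 1}a_{mp-1}^{1/p}\,t^{m-1}dt$, so $\calC(\omega)=0$ forces $a_{mp-1}=0$ for all $m\ge 1$. In particular the leading exponent of any regular differential at any point is $\not\equiv p-1\pmod p$: the canonical system is \emph{non-classical}, and its order sequence $\epsilon_0<\cdots<\epsilon_{g-1}$ at a general point avoids the residue class $p-1$ modulo $p$. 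This observation is the engine for both bounds.

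For part (2) I would use the explicit model $y^2=f(x)$ with $\deg f=2g+1$, whose regular differentials are $x^{j-1}dx/y$ for $1\le j\le g$. By the classical Manin--Yui formula the entries of $H$ are the coefficients $c_{ip-j}$ of $f^{(p-1)/2}=\sum_k c_kx^k$, for $1\le i,j\le g$; superspeciality forces all of them to vanish. The point is that the top coefficient $c_T$ with $T=\deg f^{(p-1)/2}=(2g+1)(p-1)/2$ is nonzero, being a power of the leading coefficient of $f$. A short index computation shows that as soon as $g\ge (p+1)/2$ one can solve $T=gp-j$ with $1\le j\le g$ (take $j=g-(p-1)/2$), so $c_T$ is one of the constrained entries and must vanish --- a contradiction. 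Hence $g\le (p-1)/2$; the even-degree model is handled identically, and the exclusion $(p,g)=(2,1)$ accounts for the supersingular elliptic curve, which is hyperelliptic and superspecial yet has $g=1>(p-1)/2$. I expect this part to be routine once the Manin--Yui formula is in hand.

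The main obstacle is part (1). Here no explicit model is available, and the non-classicality recorded above is by itself too weak: merely avoiding the residue $p-1\bmod p$ does not bound $g$ (downward-closed admissible order sets can be unbounded). The plan is instead to analyze the relative Frobenius $F\colon X\to X^{(p)}$ together with the rank-$p$ sheaf $F_*\calO_X$: the vanishing of $\calC$ (equivalently of $H$) rigidifies the Cartier/divided-power filtration of $F_*\calO_X$, and I would feed this into a Riemann--Roch computation for the canonical bundle and its Frobenius twist on $X^{(p)}$. The constant $\binom{p}{2}=\tfrac12 p(p-1)$ should emerge as the total length contributed by the graded pieces of that filtration (heuristically, the $\binom{p}{2}$ carry-free pairs of base-$p$ digits compatible with the exactness constraint, the digit-dominance closure of order sequences coming from the Hasse-derivative / St\"ohr--Voloch theory). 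Turning this heuristic into the sharp constant $\binom{p}{2}$, rather than a weaker bound linear in $p$, is the delicate point and the heart of Ekedahl's argument; I would expect to spend most of the effort there.
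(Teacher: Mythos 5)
Your part (2) is essentially sound for odd $p$: the Manin--Yui description of the Cartier--Manin matrix of $y^2=f(x)$, the nonvanishing of the top coefficient of $f^{(p-1)/2}$, and the index computation showing that this top coefficient becomes one of the constrained matrix entries as soon as $g\ge (p+1)/2$ do give $g\le (p-1)/2$ (you are silent on $p=2$, but so is every route discussed here). The genuine gap is part (1): what you offer there is not a proof but a research plan. You yourself concede that the non-classicality observation extracted from $\calC\equiv 0$ is too weak, and the proposed replacement --- rigidifying the filtration of $F_*\calO_X$ and ``feeding it into a Riemann--Roch computation'' so that the constant $\binom{p}{2}$ ``should emerge'' --- is precisely the part of Ekedahl's argument that carries all the content. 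You leave it entirely undone, so part (1) is unproven as written.

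For comparison, the paper never reproves this theorem directly: it is quoted from Ekedahl, and the route the paper indicates is that Baker's result (Theorem~\ref{thm:baker}) yields it as a corollary. That deduction is short and sidesteps your part (1) difficulty completely: if $X$ is superspecial then $\calC\equiv 0$ on $H^0(X,\Omega_X^1)$, so the hyperplane of regular differentials vanishing at any point is trivially stable and \emph{every} closed point of $X$ is a Cartier point; moreover $a=g$ forces $f=0$ (since $a+f\le g$), so $J_X[p](\kbar)$ is trivial and two distinct points of $X$ can never differ by a $p$-torsion point of $J_X$. As $X(\kbar)$ is infinite, one may choose $p$ such points, and Theorem~\ref{thm:baker}(1) gives $g\le p(p-1)/2$; for hyperelliptic $X$ and odd $p$, any hyperelliptic branch point is in particular a Cartier point, so Theorem~\ref{thm:baker}(2) gives $g\le (p-1)/2$. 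If you want a complete argument at the level of this paper, that is the one to write down; reconstructing Ekedahl's original proof of part (1) is a substantially larger undertaking than your sketch acknowledges.
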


	Baker gives in \cite{Baker} an alternative proof for Theorem~\ref{thm:Ekedahl}, based on the existence of Cartier points (see Section~\ref{sec:CartierPoints} for definition). Let $m$ be the rank of the Cartier operator. Re (\cite{RE}, Theorem 3.1 and Proposition 3.1) provides a generalization of this result to any value of $m$. The author proves that in fact $ g\leq (m+1)p\frac{(p-1)}{2}+pm$ and if $X$ is also hyperelliptic then $g<\frac{p+1}{2}+mp$.

	Zhou (\cite{Zhou}, Theorem 1.1) gives a strengthening of this result for the case when $m=1$ that gives a bound for $g$ of $p+\frac{p(p-1)}{2} $. Moreover, Frei (\cite{Frei18}, Theorem 3.1]) proved that if $g\geq p$ where $p$ is an odd prime, then there are no smooth hyperelliptic curves of genus $g$ defined over a field of characteristic $p$ with $a$-number equal to $g - 1$.

	There are examples, however, of curves with $a$-number $g-1$ that are non-hyperelliptic. Also Zhou finds in \cite{Zhou_anumber} a family of Artin--Schreier curves with these properties. 
	
	More generally, Pries \cite{PriesPtorsion09} proves the existence of smooth curves with $a$-number $1,2$ and $3$, under certain conditions.

	\section{Genus $4$ non-hyperelliptic curves}\label{sec:genus4}
	
	There are currently many open questions concerning the existence of curves with certain $p$-ranks and $a$-numbers, given a fixed genus $g$. For instance, there exist curves of genus $2$ and 3 with any possible $p$-rank and $a$-number over fields of characteristic $p$, with the exception of superspecial curves of genus $2$ when $p=2$ and superspecial curves of genus $3$ when $p=2,3$. For $g\geq 4$, however, it is not known if this happens. For example, consider Question 3.6 in \cite{PriesCRN}: \textit{For all $p$, does there exist a smooth curve of genus $4$ with $p$-rank $0$ and $a$-number at least $2$?}
	
	One can find in the literature partial answers to the last and similar questions. For example, if $p=3$ then by Ekedahl's Theorem (\ref{thm:Ekedahl}), there is no curve of genus $4$, with  $a$-number 4. In \cite{Zhou_EOg4}, Zhou (building on work from \cite{Frei18}, \cite{GlassPries} and \cite{PriesPtorsion09}) shows, by studying a family of Artin--Schreier curves that in characteristic 3, there are genus $4$ curves with $a$-number $a$ and $p$-rank $f$  for all $a\leq 2$ and $ f\leq a$ and for $(a,f)=(3,1)$.

	In fact, the author computes the Ekedahl-Oort types to show that the corresponding locus of $\calM_g$ is non empty of codimension at most 6. In Section~\ref{sec:CP_on_GC_p=3} we provide additional examples of genus $4$ curves with $a$-number $3$ and $p$-rank $1$ over $\F_{3}$. We know by Frei ~\cite{Frei18} that there are no $a$-number $3$ and $p$-rank $0$ genus $4$ hyperelliptic curves, so one can ask whether it is possible to have a non-hyperelliptic genus $4$ with those invariants.

	Kudo and Harashita \cite{Kudo&Harashita} also studied genus $4$ curves, they prove two results related to non-hyperelliptic superspecial curves of genus $4$. In particular they show that there are no superspecial genus $4$ curves in characteristic $7$, and that over $\F_{25}$ they are all isomorphic to  
	$$2yw+z^2=0,  x^3+a_1y^3+a_2w^3+a_3zw^2=0, $$
	in $\mathbb{P}_{\F_{25}}^3$, where $a_1,a_2\in \mathbb{F}^\times_{25}$ and $a_3\in  \mathbb{F}_{25}.$

	\subsection{Defining equations of genus $4$ non-hyperelliptic curves}\label{subsec:equations_genus4}

	If $X$ is a genus $4$, smooth, irreducible and non-hyperelliptic curve, then the canonical map embeds $X$ into $\PP^3_k$ as the intersection of the zero loci of a quadratic and a cubic homogeneous polynomial in four variables (see \cite{Hartshorne77}), that we detail below.

	We restrict our computations to what we will define as genus $4$ curves in \textit{standard form}. These are based on equations given by Kudo and Harashita \cite{Kudo&Harashita}.

	\subsubsection{Quadratic forms and reduction of cubics}
	As explained in \cite{Kudo&Harashita}, any irreducible quadratic form in $k[x,y,z,w]$ is equivalent to one of $ F_1=2xw+2yz$, $F_2=2xw+y^2-\epsilon z^2$ or $F_d=2yw+ z^2$ with some $\epsilon\notin (k^\times)^2$. Therefore we can assume that $X$ has a model given by $V(F,G)$, with $F $ being one of $F_1$, $F_2$ or $F_d$, and $G$ a homogeneous polynomial of degree $3$. The  cubic $G$ can be reduced by changes of variables, induced by the action of the orthogonal similitude groups associated to the quadratic forms. This is done in detail in Section 4 of \cite{Kudo&Harashita}. The simplified equations provide the following definition.

	\begin{definition}\label{def:standardform}
		Let $ F_1=2xw+2yz$, $F_2=2xw+y^2-\epsilon z^2$ or $F_d=2yw+ z^2$ with $\epsilon\notin (k^\times)^2$. We say that a curve $X$ of genus $4$  over $k$  is in \textit{standard form} if it is non-hyperelliptic, irreducible, smooth and $X=V(F,G)$ with \\

		\textbf{(Case D)} $F=F_d$ and
		\begin{align*}
		G& =a_0x^3+(a_1y^2+a_2z^2+a_3w^2+a_4yz+a_5zw)x+a_6y^3+a_7z^3+a_8w^3+a_9yz^2\\
		&+ b_1z^2w+b_2zw^2,
		\end{align*}\label{eq:FandG2_p=5}
		for $a_i \in k$ and $a_0,a_6\in k^\times$, with $b_1,  b_2 \in \{0,1\}$ and the leading coefficient of $r=a_1y^2+a_2z^2+a_3w^2+a_4yz+a_5zw$ is $1$ or $r=0$; or\\

		\textbf{(Case N1i)} $F=F_1$ and 
		\begin{align*}
		G&=(a_1y +a_2z)x^2 +a_3yzx+y^3 +a_4z^3 +b_1y^2z +a_5yz^2 \\ &+(a_6y^2 +a_7yz +b_2z^2)w+(a_8y +a_9z)w^2 +a_{10}w^3,
		\end{align*}
		for $a_i \in k $ with $a_1\neq 0$, $a_2\neq 0$ and for $b_1\in \{0\} \cup k^{\times}/(k^{\times})^2$ and $b_2 \in \{0,1\}$; or \\
		
		\textbf{(Case N1ii)} $F=F_1$ and 
		\begin{align*}
		G=&	(a_1y +a_2z)x^2 +a_3yzx+b_1y^2z +b_2yz^2 +(a_4y^2 +a_5yz +b_3z^2)w+(a_6y +\\ &a_7z)w^2 +a_8w^3,
		\end{align*}
		for $a_i \in k $ with $a_1a_2\neq 0$ and for $b_1,b_3 \in \{0,1\}$ and $b_2 \in \{0\} \cup k^{\times}/(k^{\times})^2$; or\\

		\textbf{(Case N2)} $F=F_2$ and 
		\begin{align*}
		\begin{split}
		G=&(a_1y + a_2z)x^2 + a_3(y^2 - \epsilon z^2)x + b_1y(y^2 -\epsilon z^2) + a_4y(y^2 + 3\epsilon z^2) +  a_5z(3y^2 \\&+\epsilon z^2)+(a_6y^2 + a_7yz + b_2z^2)w + (a_8y + a_9z)w^2 + a_{10}w^3,
		\end{split}
		\end{align*}
		for $a_i \in k$, with $(a_1,a_2)\neq (0,0)$ and $b_1,b_2 \in \{0,1\}$ and $\epsilon$ a non-trivial fixed representative of $k^\times/(k^\times)^2$,

	\end{definition}

	We remark that every genus $4$ non-hyperelliptic can be written as\linebreak  $X=V(F,G)$, where $F$ is one of $F_1, F_2, F_d$. Moreover,  $G$ can be simplified to standard form if the curve has least 37 points. If the $F=F_d$ then it is enough that $\#k>5$ (\cite{Kudo&Harashita} Lemmas 4.3.1, 4.4.1 and 4.5.1).

	\subsection{Hasse--Witt Matrix of genus $4$ non-hyperelliptic curves}\label{subsec:HWmatrix_genus4}
	Let $X=V(F,G)$ be the complete intersection on $\PP^3_k$ defined by homogeneous polynomials $F$ and $G$ in $k[x,y,z,w]$ of degrees $d$ and $c$, respectively.  Following \cite{Hartshorne77} and \cite{Baker} we see that $H^1(X,\mathcal{O}_X)\cong H^3(\PP^3,\mathcal{O}_\PP^3(-c-d))$, where the basis $\calB$ of $H^1(X,\mathcal{O}_X)$ that corresponds to the coordinates $x,y,z,w$ is associated to the basis of $ H^3(\PP^3,\mathcal{O}_\PP^3(-c-d))$ given by 
	\begin{align}\label{eq:basis_h3}
	\{x^ky^lz^mw^n : (k,l,m,n)\in (\Z_{<0})^4 \text{ and } -k-l-m-n=c+d \}.
	\end{align}
	Using this fact, Kudo and Harashita present an algorithm to compute the Hasse--Witt of such curves, which can be generalized to compute the corresponding matrix for any complete intersection over a perfect field of positive characteristic.
	
	\begin{prop}[Kudo and Harashita \cite{Kudo&Harashita}, Proposition 3.1.4]\label{prop:computation_hw}
		Let $X$ of genus $g$ be defined as above and suppose $(FG)^{p-1}=\sum c_{i_1,i_2,i_3,i_4}x^{i_1}y^{i_2}z^{i_3}w^{i_4}$. Then the Hasse--Witt matrix of $X$ is given by
		\begin{eqnarray}\label{eq:HWmatrix:general}
		\left(c_{-k_jp+k_i,-l_jp+l_i,-m_jp+m_1i-n_jp+n_i}\right)_{i,j}.
		\end{eqnarray}

	\end{prop}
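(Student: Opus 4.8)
The plan is to establish the formula by tracing the action of the Frobenius endomorphism through the isomorphism $H^1(X,\calO_X)\cong H^3(\PP^3,\calO_{\PP^3}(-c-d))$ described just above the statement. First I would fix the explicit basis \eref{eq:basis_h3} of $H^3(\PP^3,\calO_{\PP^3}(-c-d))$ consisting of monomials $x^{k}y^{l}z^{m}w^{n}$ with all exponents strictly negative and total degree $-(c+d)$, and label the $g$ basis elements by their exponent tuples $(k_i,l_i,m_i,n_i)$. Since the Hasse--Witt matrix is by Definition~\ref{def:HW_matrix} the matrix of the Frobenius endomorphism $\calF_X$ relative to this basis, the whole task reduces to computing $\calF_X$ on each basis vector and reading off the coordinates.

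The key computational input is that, under the \v{C}ech-style description of $H^3(\PP^3,\calO_{\PP^3})$ as a quotient of Laurent monomials, cohomology classes are represented by monomials with \emph{all} exponents negative, and any monomial having at least one nonnegative exponent represents the zero class. The Frobenius acts on $H^1(X,\calO_X)$, and transporting it across the isomorphism it sends a representative $\eta$ to $(FG)^{p-1}\eta^{(p)}$, where raising to the $p$-th power comes from the $p$-linearity of $\calF_X$ (Definition~\ref{def:frobenius_in H1}) and multiplication by $(FG)^{p-1}$ is the map that realizes the cup-product/residue identification of $H^1(X,\calO_X)$ with the top cohomology of $\PP^3$ twisted by $-c-d$. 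Thus, starting from the basis vector indexed by $(k_j,l_j,m_j,n_j)$, applying Frobenius produces $x^{-k_jp}y^{-l_jp}z^{-m_jp}w^{-n_jp}$ times $(FG)^{p-1}$; writing $(FG)^{p-1}=\sum c_{i_1,i_2,i_3,i_4}x^{i_1}y^{i_2}z^{i_3}w^{i_4}$ and multiplying out, the coefficient of the basis monomial $x^{k_i}y^{l_i}z^{m_i}w^{n_i}$ in the result is exactly the coefficient of $(FG)^{p-1}$ at the complementary exponent, namely $c_{-k_jp+k_i,\,-l_jp+l_i,\,-m_jp+m_i,\,-n_jp+n_i}$. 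Collecting these over all $i,j$ yields precisely the matrix \eref{eq:HWmatrix:general}.

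The steps in order are: (1) recall the monomial basis of $H^3(\PP^3,\calO_{\PP^3}(-c-d))$ and the vanishing criterion for classes with a nonnegative exponent; (2) identify the transported Frobenius as $\eta\mapsto (FG)^{p-1}\eta^{(p)}$, checking the $p$-linearity and the degree shift by $c+d$ so that the target again lands in the correct twist; (3) substitute the basis monomial, expand, and match coefficients to extract the matrix entries; and (4) verify the index bookkeeping so that only the surviving (all-negative-exponent) monomials contribute, which is exactly what forces the subscripts in \eref{eq:HWmatrix:general}.

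The main obstacle I expect is step (2): pinning down that the Frobenius on $H^1(X,\calO_X)$, after the identification with $H^3(\PP^3,\calO_{\PP^3}(-c-d))$, is genuinely multiplication by $(FG)^{p-1}$ composed with the coordinatewise $p$-th power. This requires care with the \v{C}ech representatives, with how the defining complete-intersection equations $F$ and $G$ enter the connecting maps in the long exact sequences computing the cohomology of $X$ from that of $\PP^3$, and with the compatibility of the absolute Frobenius with these identifications. The remaining coefficient-matching in step (3) is then a routine, if notation-heavy, bookkeeping exercise, and the vanishing criterion in step (4) is immediate once the basis is fixed.
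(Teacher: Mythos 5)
The paper does not actually prove this proposition: it is imported verbatim (with attribution) from Kudo and Harashita, so there is no in-paper argument to compare against, and the relevant benchmark is the proof in the cited source. Your outline follows exactly that source's strategy: realize $H^1(X,\calO_X)$ as $H^3(\PP^3,\calO_{\PP^3}(-c-d))$ with its monomial basis \eref{eq:basis_h3}, transport $\calF_X$ across the identification, and read off matrix entries by expanding $(FG)^{p-1}$ against the all-negative-exponent monomials. Your steps (1), (3) and (4) are correct and are indeed routine; in particular your coefficient bookkeeping $c_{-k_jp+k_i,\,-l_jp+l_i,\,-m_jp+m_i,\,-n_jp+n_i}$ is the right reading of the $(i,j)$ entry under Definition~\ref{def:HW_matrix} (and silently repairs the typo in the displayed formula \eref{eq:HWmatrix:general}, where a comma is missing and $m_i$ appears as $m_1i$).

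The genuine gap is your step (2), which you flag as "the main obstacle" but never discharge: the claim that, under the identification $H^1(X,\calO_X)\cong H^3(\PP^3,\calO_{\PP^3}(-c-d))$, the $p$-linear Frobenius becomes $\eta\mapsto (FG)^{p-1}\eta^{(p)}$. This claim \emph{is} the proposition — everything after it is coefficient matching — so a proof that assumes it has reduced the statement to itself. What is actually needed is to factor the identification through the Koszul resolution of $\calO_X$ by the regular sequence $(F,G)$ on $\PP^3$ (two connecting homomorphisms, each an isomorphism by the vanishing of intermediate cohomology of twists of $\calO_{\PP^3}$), and then to observe that the absolute Frobenius carries the ideal $(F,G)$ into $(F^p,G^p)$, so one must compare the Koszul complex of $(F^p,G^p)$ with that of $(F,G)$; the comparison map in top degree is precisely multiplication by $(FG)^{p-1}$, and one must check it commutes with both connecting maps. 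Without carrying out this diagram chase (or citing the classical hypersurface/complete-intersection result that encapsulates it), the proposal is a correct plan whose only nontrivial ingredient is still missing.
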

	
	This formula gives a matrix $H$ that represents the action of $\calF_X$ by left matrix multiplication. Let $\bv$ be the column vector corresponding to an element of $H^1(X,\calO_X)$ expressed in terms of the basis $\calB$. Then the image of $\bv$ under $\mathcal{F}_X$ is given by $H\cdot \bv^{(p)}$, since  $\mathcal{F}_X$ is $p$-linear.

	In the next section, we use Proposition~\ref{prop:computation_hw}  together with  the equations from Section~\ref{sec:genus4} to compute examples of non-hyperelliptic smooth curves of genus $4$ with $a$-number $3$.

	\section{A database of curves in standard form over $\F_p$ }\label{sec:examples_goodcurves_fp}

	In this section we construct a database of genus 4 curves in standard form (Definition~\ref{def:standardform}) over $\F_p$ for $p \in \{3,5,7,11\}$. We restrict our data collection to non-ordinary and non-superspecial curves, that is, curves with $a$-number equal to $1,2$ or $3$. First let us explain the notation used:
	\begin{itemize}
		\item Let $\avec \in \F_p^{20}$. We denote by $G_{\avec}$ the cubic whose coefficients correspond to the entries of $\avec$, assigned to the monomials of degree 3 in $\F_p[x,y,z,w]$ in graded lexicographic order, which means that we consider $x>y>z>w$ and to order the monomials we first compare the exponents of $x$, then those of $y$ and so on. 
		
		\item For each one of the cases D, N1i, N1ii and N2 in Definition~\ref{def:standardform} we define a subset of $\F_p^{20}$ such that for all $\avec$ in that subset, the cubic $G_{\avec}$ has conditions. We denote each subset by $\Ds{p},\Ni{p},\Nii{p}$ and $\Ntwo{p}$.
		
	\end{itemize}
	As a reference, the cardinality of each of the sets above is shown in Table~\ref{tab:card_vectors_sets}.

	\begin{table}[h]  
		\caption{Cardinality of the sets $\Ds{p}$, $\Ni{p}$, $\Nii{p}$, $\Ntwo{p}$.}\label{tab:card_vectors_sets}
		\begin{center}
			\begin{tabular}{cc}
				Set	&	Cardinality	\\ \hline \hline
				$\Ds{p}$	&	$4p^3(p-1)(p^5+p-2)$	\\ 
				$\Ni{p}$	&	$6p^8(p-1)^2$	\\ 
				$\Nii{p}$	&	$12p^6(p-1)^2$	\\ 
				$\Ntwo{p}$	&	$4p^8(p^2-1)$	\\ \hline
			\end{tabular}
			
		\end{center}
	\end{table}
	To construct a curve in standard form we select an element $\avec$ in one of the subsets above together with the corresponding $F \in \{F_1,F_2,F_d\}$ and verify if $V(F,G_\avec)$ is non-singular, irreducible and  has genus $4$. In practice, we implement the algorithm in \texttt{Magma}, where we check each of the conditions as follows: 
	\begin{itemize}
		\item Irreducibility: using the intrinsic \texttt{Magma} function \texttt{IsIrreducible}, which verifies the condition over the base field by a Gr\"{o}bner basis computation.
		
		\item Genus: the command \texttt{Genus} finds the arithmetic genus of the projective normalization of the curve. 
		\item Nonsingularity: Implementing Algorithm \texttt{DetermineNonSingularity} (Algorithm 3.2.1 in \cite{Kudo&Harashita}), which is based on solving a radical membership problem on the minors of the Jacobian matrix of $V(F,G_\avec)$. The later is done by a \texttt{RadicalMembership} algorithm (\cite{Kudo&Harashita}, Appendix A) 
		
	\end{itemize}

	Next, suppose that $X=V(F,G_\avec)$ is a curve in standard form, then we need to determine its $a$-number and $p$-rank. We do this by computing the Hasse--Witt matrix $H$ of $X$  with respect to the basis of $\HOOX$ given by (\ref{eq:basis_h3}), as in Proposition~\ref{prop:computation_hw}. The $a$-number is equal to $4-\rank(H)$ and the $p$-rank $f$ is the rank of  $HH^{(p)}\cdots H^{\left(p^{g-1}\right)}$. In our case, since $F$ and $G_\avec$ are defined over $\F_p$, then $f=\rank(H^{g})$.

	\subsection{Collecting the data}\label{sec:data_collection_curves}

	We apply the above procedure to random samples of tuples in $\Ds{p}$, $\Ni{p}$, $\Nii{p}$ and $\Ntwo{p}$ for $p\in \{3,5,7,11\}$ in order to gather statistical information. We construct the sample using the intrinsic \texttt{Random} function in \texttt{Magma}. In addition, we classify the curves by $a$-number and $p$-rank. 
	
	When we analyze the samples of curves in our data, we will often compare the number of curves with certain $p$-rank and/or $a$-number with the total of smooth curves in standard form obtained. In our search we classify the curves with $a$-number $0,1,2$ and $3$. We ignore the curves with $a=4$ because they represent a very small proportion of the curves  and hence they do not affect the percentages in a significant way. 
	
	Also, our focus will be studying the occurrence of Cartier points (see Section~\ref{sec:CartierPoints}), and we already know that superspecial curves have infinitely many of them (Baker \cite{Baker}). Here is how we will refer to our different samples:
	\begin{itemize}
		\item \textbf{Total set}: the set $\Ds{p}$, $\Ni{p}$, $\Nii{p}$ or $\Ntwo{p}$, depending on the case.
		\item\textbf{Sampled set}: the curves of the sets above that are included in our random search.
		\item \textbf{Smooth sample}: the set of cubics from the sampled set that give smooth, irreducible, genus $4$ curves (excluding superspecial curves).
		\item \textbf{Singular sample}: the set of cubics from the sampled set that give curves that fail either one of the conditions for smoothness, irreducibility or genus. 
	\end{itemize}
	If we do not specify the subcase, then	\textbf{ Sampled set}, \textbf{Smooth sample} and \textbf{Singular sample} will refer to the total samples throughout the four cases. 
	
	\subsection{Summary of results of sampling search}\label{sec:summary_sampling}
	In this section we display the overall results from the sampling search. The sizes of our final Sampled sets by case are shown in \tref{tab:total_sample_sizes}.
	
	\begin{table}[h]
		\caption{Total sampled sizes over $\F_p$.}\label{tab:total_sample_sizes}
		\begin{center}
			\begin{tabular}{c|cccc|c}
				$p$	&	D	&	N1i	&	N1ii	&	N2	&	Total sample	\\	\hline \hline
				$3$	&	52704	&	92123	&	34992	&	6447	&	186266	\\	
				$5$	&	179728	&	179970	&	179434	&	179970	&	719102	\\	
				$7$	&	215957	&	225193	&	206890	&	214998	&	863038	\\	
				$11$	&	89999	&	91100	&	89999	&	90000	&	361098	\\	\hline
			\end{tabular}
		\end{center}
	\end{table}
	
	In Table~\ref{tab:summary_allcurves} we list the number of curves with $p$-ranks $f=0,1,2,3$ and $a$-number $a=1,2,3$, over $\F_p$ for $p\in \{3,5,7,11\}$. We include the totals for the singular samples and the ordinary curves (that is, those with $a$-number $0$). From this classification we have the following statement:
	
	\begin{cor}\label{cor:no_a1_f0_curves}
		In our data set of genus $4$ curves in standard form there are no curves with $p$-rank $0$ and $a$-number $1$, when $p\in \{3,5,7,11\}$.
	\end{cor}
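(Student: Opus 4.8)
The plan is to treat this as an empirical corollary of the classification already carried out, after first recording exactly which linear-algebraic condition the pair $(a,f)=(1,0)$ imposes on the Hasse--Witt matrix. First I would recall from \eqref{eq:a_number_rank} that the $a$-number of $X=V(F,G_{\avec})$ equals $4-\rank(H)$, where $H$ is the Hasse--Witt matrix produced by Proposition~\ref{prop:computation_hw}. Because every curve in the database is defined over the prime field $\F_p$, every entry of $H$ lies in $\F_p$, so $H^{(p^i)}=H$ for all $i$; consequently \eqref{eq:p-rank-stablerank} collapses to $f=\rank(H^4)$. Hence a curve in the smooth sample has $(a,f)=(1,0)$ if and only if $\rank(H)=3$ and $H^4=0$, that is, $H$ is a nilpotent $4\times 4$ matrix of rank $3$ (a single size-$4$ Jordan block up to conjugacy).

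Next I would observe that this configuration is \emph{not} excluded by any general constraint available to us: the inequality $0\le a+f\le 4$ here reads only $a+f=1\le 4$, and over $\F_p$ a nilpotent rank-$3$ matrix certainly exists. So there is no a priori obstruction forcing this cell to be empty, and the statement cannot be derived from a purely structural argument; it must instead be read off from the completed computation.

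The conclusion then follows by inspecting the classification in Table~\ref{tab:summary_allcurves}: for each $p\in\{3,5,7,11\}$, the number of smooth, irreducible, genus~$4$ curves in standard form drawn from the sampled sets $\Ds{p}$, $\Ni{p}$, $\Nii{p}$, $\Ntwo{p}$ that simultaneously satisfy $a=1$ and $f=0$ is zero. Since no sampled curve realizes the matrix condition isolated above, no curve in the data set has $p$-rank $0$ and $a$-number $1$.

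The main obstacle is therefore not a clever deduction but the integrity and coverage of the underlying computation. Because the absence of the $(1,0)$ stratum is not forced numerically, the corollary is only as strong as the correctness of the Hasse--Witt computation via Proposition~\ref{prop:computation_hw} together with the rank and nilpotency tests over $\F_p$. The one point worth checking carefully is that these finite-field computations detect the geometric invariants: this is legitimate since the rank of $H$ and the vanishing of $H^4$ are insensitive to the extension $\F_p\subset\Fbar_p$, so $\rank_{\F_p}(H)=\rank_{\Fbar_p}(H)$ and $\rank_{\F_p}(H^4)=\rank_{\Fbar_p}(H^4)$ agree with the $a$-number and $p$-rank computed over $\Fbar_p$.
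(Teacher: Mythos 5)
Your proposal is correct and takes essentially the same route as the paper: the paper's entire proof of this corollary is the citation of Table~\ref{tab:summary_allcurves}, i.e., the statement is read off from the computed data exactly as in your final step. Your preliminary reformulation of $(a,f)=(1,0)$ as $\rank(H)=3$ with $H^4=0$, and the observation that these rank conditions are insensitive to base extension, are accurate supplements but play no role beyond what the table inspection already provides.
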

	\begin{proof}
		See Table~\ref{tab:summary_allcurves}.
	\end{proof}

	\begin{table}[h]
		\caption{ Sample of curves in standard over $\F_p$.}\label{tab:summary_allcurves}
		\begin{center}
			\begin{tabular}{cccccc}
				$a$	&	$f$	&	3	&	5	&	7	&	11	\\	\hline\hline
				\multicolumn{2}{c}{Sampled set}			&	186266	&	719102	&	863038	&	361098	\\	\hline
				\multicolumn{2}{c}{Singular sample}			&	92654	&	251584	&	191925	&	81845	\\	\hline
				0	&		&	56983	&	370476	&	529394	&	253627	\\	\hline
				1	&	0	&	0	&	0	&	0	&	0	\\	
				1	&	1	&	1679	&	3592	&	1652	&	217	\\	
				1	&	2	&	4134	&	14615	&	10687	&	2146	\\	
				1	&	3	&	23485	&	74585	&	76142	&	23044	\\	
				&	Total	&	29298	&	92792	&	88481	&	25407	\\	\hline
				2	&	0	&	1157	&	183	&	44	&	3	\\	
				2	&	1	&	2095	&	790	&	231	&	21	\\	
				2	&	2	&	3379	&	3231	&	1624	&	194	\\	
				&	Total	&	6631	&	4204	&	1899	&	218	\\	\hline
				3	&	0	&	0	&	10	&	0	&	0	\\	
				3	&	1	&	700	&	36	&	5	&	1	\\	
				&	Total	&	700	&	46	&	5	&	1	\\	\hline
				\multicolumn{2}{c}{Smooth sample}			&	93612	&	467518	&	619779	&	279253	\\	\hline
			\end{tabular}
		\end{center}
	\end{table}

	Next we discuss the results from the search for each $p$. We include the sizes of the total set, sampled set, smooth sample and singular sample, normalized by $\log_{p}$. Then we show the break down of curves with $a=1,2,3$ and specify the percentage of the sampled set and smooth sample they represent.

	\subsubsection{Case $p=3$ .}\label{sec:goodcurvesp=3}
	
	We checked a total of $186266$ tuples, which corresponds to approximately $71\%$ of all the possible tuples. We saw that $50.26\%$ of them gave smooth curves and $19.66\%$ of the total had $a$-number $a=1,2,3$. 
	Notice that we were able to sort all of the cubics from the total sets $\Ds{3}$ and $\Nii{3}$.

	\subsubsection{Case $p=5$. }\label{sec:goodcurvesp5}

	We selected a random sample of $719102$ tuples in $\Ds{5}, \Ni{5}, \Nii{5}$ and $\Ntwo{5}$. This is around $1.49\%$ of the total set. A $61.01\%$ of that sample corresponds to the Smooth sample, and $13.49\%$ of the total are non-ordinary curves.

	\subsubsection{Case $p=7$. }\label{sec:goodcurvesp=7}
	
	We analyzed a random sample of $863038$ pairs of tuples in $\Ds{7}$, $\Ni{7}$, $\Nii{7}$ and $\Ntwo{7}$, which corresponds to $0.06\%$ of the total. The smooth sample from this set has $619779$ tuples and $90385$ of them are curves in standard form with $a$-number $1,2$ or $3$. These correspond to $71.81\%$ and $10.47\%$ of the total, respectively. %

	\subsubsection{Case $p=11$}\label{sec:goodcurves_p11}
	
	In this case we did a random search that included $361098$ tuples in $\Ds{11}, \Ni{11}$, $ \Nii{11}$ and $\Ntwo{11}$, this is approximately $0.0002\%$ of the total set. Of this sample, $77.33\%$  are smooth and  $7.10\%$ of the total are non-ordinary.

	\subsection{The case $a=3$}\label{sec:summary_exhaustive}
	In order to focus on the analysis of some aspects of genus $4$ curves with $a$-number $3$, we apply the procedure above to all the tuples in $\Ds{p}, \Ni{p}, \Nii{p}$ and $\Ntwo{p}$, but only store the smooth, irreducible curves with $a=3$. We did this for $p=3,5$. The search is also done for $p=7$ but only for tuples in $\Ds{7}$ and a subset of $\Ni{7}$, because of the long computing times. It is important to remark that after the search, we classify the curves by $\F_p$-isomorphism classes.
	
	In the cases $p=3,5$ we also have a complete list of all curves in standard form that have $a$-number $3$. For $p=7$ we have a subset of them: the ones in case D and those in case N1i where $(b_1,b_2)=(0,0)$, we denote these lists by 7(D) and 7(N1i'). The information on these curves, classified by isomorphism classes, is shown in Table~\ref{tab:isom_classes_p3_a3}. A first conclusion that we can draw from this data is the following: 
	
	\begin{cor}\label{cor:no_prank0_p3_curves}
		There are no genus $4$ curves in standard form over $\F_3$ with $p$-rank $0$ and $a$-number $3$.
	\end{cor}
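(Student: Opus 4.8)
The plan is to reduce the statement to the exhaustive search described in Section~\ref{sec:summary_exhaustive}, exploiting the fact that for $p=3$ the sets $\Ds{3}$, $\Ni{3}$, $\Nii{3}$ and $\Ntwo{3}$ are finite and small enough to be examined completely. First I would record the structural observation that makes the assertion meaningful. If $X$ has $a$-number $3$, then by \eqref{eq:a_number_rank} its Hasse--Witt matrix $H$ is a $4\times 4$ matrix of rank $1$. Since $X$ is defined over $\F_3$, the Frobenius twists act trivially on entries, so \eqref{eq:p-rank-stablerank} gives $f=\rank(H^4)$. Writing $H=uv^{T}$ for column vectors $u,v\in\F_3^{4}$, one has $H^2=(v^{T}u)\,H=\operatorname{tr}(H)\,H$, hence $H^4=\operatorname{tr}(H)^3 H$. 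Because $\F_3$ is a field, $\operatorname{tr}(H)^3\neq 0$ exactly when $\operatorname{tr}(H)\neq 0$; therefore $f=1$ when $\operatorname{tr}(H)\neq 0$ and $f=0$ precisely when $H$ is nilpotent. Thus among curves with $a=3$ the only way to obtain $p$-rank $0$ is for the rank one Hasse--Witt matrix to be nilpotent, and the whole corollary amounts to showing that this never happens in standard form over $\F_3$.

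Next I would run the procedure of Section~\ref{sec:examples_goodcurves_fp} over every tuple $\avec$ in the four finite sets: for each candidate verify smoothness, irreducibility and genus $4$, compute $H$ via Proposition~\ref{prop:computation_hw}, and retain those with $\rank(H)=1$. By Corollary~\ref{cor:exhaustive_search_p3} this yields exactly $27$ curves up to $\F_3$-isomorphism, and for each I would read off $f=\rank(H^4)$. The conclusion is that every one of these $27$ Hasse--Witt matrices has nonzero trace, equivalently $f=1$, so no curve in the list has $p$-rank $0$; this is exactly the content recorded in Table~\ref{tab:isom_classes_p3_a3}.

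The main obstacle is not any single calculation but guaranteeing completeness and correctness of the enumeration. On the one hand I must be sure that the search genuinely exhausts $\Ds{3}\cup\Ni{3}\cup\Nii{3}\cup\Ntwo{3}$ (this is why $p=3$ is treated separately from the merely sampled primes) and that the reduction to $27$ isomorphism classes does not accidentally merge away an $f=0$ representative. On the other hand, I would want to stress that the nilpotent case is being \emph{excluded}, not merely left unobserved: since there is no a priori obstruction forcing $\operatorname{tr}(H)\neq 0$ for a rank one Hasse--Witt matrix, the non-existence of $(a,f)=(3,0)$ curves is a phenomenon that can only be certified by inspecting the full finite standard-form family, so the argument stands or falls on the exhaustiveness established in Corollary~\ref{cor:exhaustive_search_p3}.
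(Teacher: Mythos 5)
Your proposal is correct and takes essentially the same route as the paper: the corollary is certified by the exhaustive enumeration of all standard-form tuples over $\F_3$ and inspection of the resulting $27$ isomorphism classes, which is exactly the content of Table~\ref{tab:isom_classes_p3_a3} that the paper points to. Your rank-one observation that $f=\rank(H^4)$ reduces to checking $\operatorname{tr}(H)\neq 0$ is a tidy reformulation of the final verification, but it does not replace the computational step, so the substance of the argument coincides with the paper's.
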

	
	\begin{table}[h]  
		\caption{Isomorphism classes of curves in standard form with $a=3$ over $\F_p$. }\label{tab:isom_classes_p3_a3}
		
		\begin{center}
			\begin{tabular}{ccccc}
				$p$-rank	&	3	&	5	&	7(D) &7(N1i')	\\	\hline \hline
				0	&	0	&	36	&	9&2	\\	
				1	&	27	&	98	&	56&	27\\	
				\hline
				Total	&	27	&	134	&	65&	29\\	\hline
			\end{tabular}
		\end{center}
	\end{table}
	
	\subsubsection{Case $p=3$}
	
	There are a total of $1188$ vectors $\avec$ in $\Ds{3}$, $\Ni{3}$, $\Nii{3}$ and $\Ntwo{3}$ that give curves of genus $4$ and $a$-number $3$ over $\F_3$. We give a summary of the number of vectors classified by case and some restrictions that occur in the case D. We use {\magma} to classify these curves in $\F_3$-isomorphism classes, which we detail in Table~\ref{table:isom_classes_p3} and Lemma~\ref{lem:standard_form_a3_p3_curves}.

	\begin{table}[h] 
		\caption{Isomorphism classes of genus $4$ and $a=3$ curves over $\F_3$.}
		\label{table:isom_classes_p3}
		\begin{center}
			\begin{tabular}{cccc}
				
				Case &$\#$ Isomorphism classes & $\#$ $p$-rank $1$& $\#$ $p$-rank $0$\\
				\hline 
				\hline 
				D&6 & 6&0 \\
				
				N1(i)	& 7& 7&0 \\
				
				N1(ii)	& 3& 3&0 \\
				
				N2	& 11& 11& 0\\
				\hline 
				Total	& 27& 27&0 \\ \hline
			\end{tabular}
		\end{center}
	\end{table}
	
	\begin{cor}\label{cor:exhaustive_search_p3}
		There are, up to $\F_3$-isomorphism, exactly $27$ curves of genus $4$ with $a=3$ in standard form.
	\end{cor}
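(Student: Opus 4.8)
The statement is an exhaustive computational classification, so the plan is to enumerate, filter, and then collapse under isomorphism. The crucial observation that makes this feasible at $p=3$ is that each of the total sets $\Ds{3}$, $\Ni{3}$, $\Nii{3}$, $\Ntwo{3}$ is \emph{finite}, with cardinalities recorded in Table~\ref{tab:card_vectors_sets}; evaluating those formulas at $p=3$ gives sets small enough to traverse completely rather than merely sample. So first I would loop over every vector $\avec$ in each of the four sets, form the curve $X=V(F,G_{\avec})$ with the appropriate $F$, and retain only those $\avec$ for which $X$ is smooth, irreducible, and of genus $4$ (using the \magma{} routines \texttt{IsIrreducible}, \texttt{Genus}, and \texttt{DetermineNonSingularity} as described in Section~\ref{sec:examples_goodcurves_fp}).

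For each surviving curve I would compute the Hasse--Witt matrix $H$ via Proposition~\ref{prop:computation_hw} and keep exactly those with $a=4-\rank(H)=3$, i.e. $\rank(H)=1$. This step produces the $1188$ vectors distributed over the four cases. That raw count is not yet the assertion: the claim concerns $\F_3$-isomorphism classes, so the remaining work is to collapse these $1188$ curves under isomorphism.

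Here the key structural reduction is that the equivalence class of the quadratic form $F$ is an isomorphism invariant of $X$, since the quadric is the unique quadratic relation among the canonical coordinates, and the three forms $F_1$, $F_2$, $F_d$ represent distinct equivalence classes (as recalled in Section~\ref{subsec:equations_genus4}, following \cite{Kudo&Harashita}): indeed $F_d$ has rank $3$ while $F_1$ and $F_2$ are inequivalent rank-$4$ forms. Consequently no curve from one case can be isomorphic to a curve from another, and the classification decomposes into four independent problems, one per case. Within each case I would test pairwise $\F_3$-isomorphism in \magma{} and count the resulting classes; the outputs are $6$, $7$, $3$, $11$ for cases D, N1i, N1ii, N2 as recorded in Table~\ref{table:isom_classes_p3}, summing to $27$.

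The main obstacle is the within-case isomorphism classification rather than the enumeration. Reduction to standard form is carried out in \cite{Kudo&Harashita} using the orthogonal similitude group of $F$, but standard form need not be a strict normal form: residual isomorphisms can still identify distinct admissible vectors, so one cannot simply read off the class count from the vector count. I would therefore rely on genuine curve-isomorphism testing in \magma{}, cross-checked against isomorphism invariants such as the $p$-rank (uniformly $1$ here, per Table~\ref{table:isom_classes_p3}), to verify both that no two curves flagged as distinct are secretly isomorphic and that the $1188$ vectors really do fall into exactly $27$ classes.
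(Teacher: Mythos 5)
Your proposal matches the paper's own procedure essentially verbatim: the paper performs the same exhaustive enumeration over $\Ds{3}$, $\Ni{3}$, $\Nii{3}$, $\Ntwo{3}$, filters for smooth irreducible genus-$4$ curves with $a=3$ (obtaining the same $1188$ vectors), and uses \magma{} to collapse them into $\F_3$-isomorphism classes, counting $6+7+3+11=27$ as in Table~\ref{table:isom_classes_p3}. Your added justification that curves from different cases cannot be isomorphic (since the equivalence class of the quadric is an isomorphism invariant) is a correct and worthwhile remark that the paper leaves implicit in its case-by-case tabulation.
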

	\begin{proof}
		See \tref{table:isom_classes_p3}.
	\end{proof}

	\begin{lem}\label{lem:standard_form_a3_p3_curves}
		Let $X$ be a curve in standard form with $a$-number $3$ defined over $\F_3$. 
		Then $X$ is isomorphic to one of the following:
		
		\begin{itemize}
			\item $V(F_d,G)$ with $G=x^3+y^3+xyz+c_1yz^2+xw^2+c_2w^3,$
			where $c_1 \in \F_3^\times$ and $c_2 \in \F_3$.
			
			\item $V(F_1,G)$ with $G=x^2y+c_1y^3+x^2z+c_2y^2w+c_3w^3+c_4z^3 +z^2w,$
			where $c_1 \in \{0,1\}$, $c_2,c_3 \in \F_3^\times$ and $c_4 \in \F_3$.
			
			\item $V(F_2,G)$ with $G=x^2y+c_1y^3+c_2x^2z+c_3z^3+c_4yzw+c_5w^3+c_6(z^2w+y^2w),$
			where $c_1,c_3 \in \F_3$, $c_2,c_4,c_5 \in \F_3^\times$ and $c_6 \in \{0,1\}$.
		\end{itemize}
	\end{lem}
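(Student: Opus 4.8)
The statement is an exhaustive classification over the finite ground field $\F_3$, so the plan is to turn it into a complete finite computation and then organize the output into isomorphism classes. First I would use Definition~\ref{def:standardform}: every genus $4$, non-hyperelliptic, smooth, irreducible curve over $\F_3$ of the relevant kind is $V(F,G_{\avec})$ with $F\in\{F_1,F_2,F_d\}$ and $\avec$ ranging over one of the finite sets $\Ds{3}$, $\Ni{3}$, $\Nii{3}$, $\Ntwo{3}$. By Table~\ref{tab:card_vectors_sets} these sets are small enough over $\F_3$ to be enumerated in full (this is exactly the exhaustive search of Section~\ref{sec:summary_exhaustive}), so no sampling is involved and every such curve is accounted for.

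Second, for each tuple $\avec$ I would compute the Hasse--Witt matrix $H$ via Proposition~\ref{prop:computation_hw}: since $p=3$ this only requires expanding $(FG_{\avec})^{2}$ and reading off the coefficients indexed by the monomial basis (\ref{eq:basis_h3}). The condition $a=3$ is then simply $\rank(H)=4-3=1$, which is immediate to test. Discarding the tuples that fail smoothness, irreducibility, or the genus-$4$ condition (checked as in Section~\ref{sec:data_collection_curves}) leaves the finite list of $a=3$ curves, namely the $1188$ tuples recorded in Section~\ref{sec:summary_exhaustive}.

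Third, and this is the real content of the lemma, I would sort these tuples into $\F_3$-isomorphism classes. Two curves in standard form are isomorphic precisely when their canonical models in $\PP^3_{\F_3}$ differ by a linear change of coordinates carrying the pair $(F,G)$ to an equivalent pair; passing to standard form already quotients by the action of the orthogonal similitude group of $F$, but this is not a full normalization, so distinct standard-form tuples can still define isomorphic curves. I would use \magma's curve-isomorphism routines to compute the classes, obtaining the counts $6,7,3,11$ for cases D, N1i, N1ii, N2 of Table~\ref{table:isom_classes_p3}, and then within each class exhibit a representative of the stated shape. Finally I would check that the three displayed families realize all $27$ classes: the $F_d$ family supplies the $6$ classes of case D, while the $F_1$ and $F_2$ families absorb the N1i/N1ii classes ($7+3=10$) and the N2 classes ($11$) respectively, after the residual coordinate changes.

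The main obstacle is precisely this isomorphism classification: the raw count of $a=3$ tuples ($1188$) vastly exceeds the number of classes ($27$), so the crux is to pin down all residual isomorphisms among standard forms---those not already removed by the orthogonal similitude reduction---and to certify both that the listed representatives are complete and that the chosen normal forms (e.g. the exponent patterns $z^2w$, $y^2w$, $yzw$) can always be reached. Making this rigorous rather than merely reading it off \magma requires either relying on the verified isomorphism test or, preferably, recording the explicit linear transformations that bring each surviving tuple into one of the three displayed forms.
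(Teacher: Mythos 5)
Your proposal is correct and follows essentially the same route as the paper: an exhaustive enumeration of all tuples in $\Ds{3}$, $\Ni{3}$, $\Nii{3}$, $\Ntwo{3}$, computation of the Hasse--Witt matrix via Proposition~\ref{prop:computation_hw} to isolate the $1188$ smooth $a=3$ curves, and a \magma{} classification into the $27$ $\F_3$-isomorphism classes of Table~\ref{table:isom_classes_p3}, with representatives then read off in the three displayed families. Your closing remark about certifying the residual isomorphisms explicitly is a fair point about rigor, but it is not a departure from the paper, which likewise relies on the computer-algebra isomorphism test.
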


	\subsubsection{Case $p=5$}

	From the exhaustive search we conclude that there are $134$ $\F_5$-isomorphism classes of standard form curves over $\F_5$ with $a$-number $3$. Table~\ref{table:isom_classes_p5} contains the summary of the isomorphism classes and the number of curves with $p$-rank $1$ and $p$-rank $0$.

	\begin{table}[h]  
		\caption{Isomorphism classes of genus $4$ and $a=3$ curves over $\F_5$.}
		\label{table:isom_classes_p5}
		\begin{center}
			\begin{tabular}{cccc}
				
				Case &$\#$ Isomorphism classes & $\#$ $p$-rank $1$& $\#$ $p$-rank $0$\\
				\hline 
				\hline 
				D&59 &46 &13 \\
				
				N1(i)	& 60& 48&12 \\
				
				N1(ii)	& 6& 4&2 \\
				
				N2	& 9& 0& 9\\
				\hline 
				Total	& 134& 98& 36\\ \hline
			\end{tabular}
		\end{center}
	\end{table}
	
	\begin{cor}\label{cor:exhaustive_search_p5}
		
		There are, up to $\F_5$-isomorphism, exactly $134$ curves of genus $4$ and $a=3$ in standard form.
		
	\end{cor}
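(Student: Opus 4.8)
The statement is an exhaustive computational result, so the plan is to reduce it to a finite enumeration, carry out that enumeration, and then collapse the output under the $\F_5$-isomorphism relation. By Definition~\ref{def:standardform}, every genus $4$ curve in standard form over $\F_5$ is of the form $V(F,G_{\avec})$ for some quadratic form $F\in\{F_1,F_2,F_d\}$ and some parameter vector $\avec$ lying in the corresponding set $\Ds{5}$, $\Ni{5}$, $\Nii{5}$, or $\Ntwo{5}$. These sets are finite, with cardinalities recorded in Table~\ref{tab:card_vectors_sets}, so the search space is a priori finite; the first task is simply to run over every such $\avec$ rather than over a random sample.

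For each candidate $\avec$ I would first apply the smoothness, irreducibility and genus tests described in Section~\ref{sec:data_collection_curves} (the \magma{} intrinsics \texttt{IsIrreducible} and \texttt{Genus}, together with \texttt{DetermineNonSingularity}) to discard every tuple that does not define a smooth irreducible genus $4$ curve. For the survivors I would compute the Hasse--Witt matrix $H$ via Proposition~\ref{prop:computation_hw}, read off the $a$-number from \eref{eq:a_number_rank} as $a=4-\rank(H)$, and retain only the curves with $\rank(H)=1$, i.e. $a=3$. This produces the complete finite list of all parameter vectors over $\F_5$ giving standard-form curves with $a$-number $3$.

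The remaining and genuinely delicate step is the passage from this list of defining tuples to a count of isomorphism classes. Since these curves are non-hyperelliptic of genus $4$, each is canonically embedded, so two of them are isomorphic as abstract curves precisely when their models are projectively equivalent, i.e. conjugate under $\text{PGL}_4(\F_5)$ carrying one pair $(F,G)$ to the other. I would test this equivalence in \magma{} for every pair of surviving curves. The point requiring care is that the standard-form partition is \emph{not} a partition into isomorphism classes: although $F_d$ is degenerate and is thereby distinguished from the two rank-$4$ forms, the cases N1i and N1ii share the same quadratic form $F_1$, so an isomorphism may identify an N1i curve with an N1ii curve (or merge curves inside a single case). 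Hence the equivalence test must be run across cases, not only within them, and one must verify that the class counts $59$, $60$, $6$, $9$ for D, N1i, N1ii, N2 in Table~\ref{table:isom_classes_p5} already incorporate all such identifications.

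I expect the main obstacle to be computational scale and the reliability of the equivalence test rather than any conceptual difficulty: the sets in Table~\ref{tab:card_vectors_sets} are large (for instance $\Ni{5}$ has $6\cdot 5^8(5-1)^2$ elements), so the exhaustive sweep must be organized efficiently, and the $\text{PGL}_4(\F_5)$-equivalence check on the final list must be complete enough that no two listed representatives are actually isomorphic and no genuine class is split. Granting the correctness of these computations, summing the four case counts gives $59+60+6+9=134$, which is the claim.
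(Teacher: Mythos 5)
Your proposal is correct and follows essentially the same route as the paper: an exhaustive sweep over all tuples in $\Ds{5}$, $\Ni{5}$, $\Nii{5}$, $\Ntwo{5}$, filtering by smoothness, irreducibility, genus and $\rank(H)=1$, and then classifying the survivors into $\F_5$-isomorphism classes, with the paper's proof simply citing the resulting counts $59+60+6+9=134$ in Table~\ref{table:isom_classes_p5}. Your added remark that the isomorphism test must be run across the cases sharing the quadratic form $F_1$ (N1i versus N1ii), not merely within each case, is a sensible precaution that the paper leaves implicit in its classification step.
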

	\begin{proof}
		See \tref{table:isom_classes_p5}.
	\end{proof}

	\subsubsection{Case $p=7$}

	The  exhaustive search for all curves in standard form with $a=3$ proved to be too time consuming for $p=7$. So it was only possible to find the curves in the case D and a subset of N1 curves.  We conclude from this search that there are at least $94$ $\F_7$-isomorphism classes of curves of $a$-number $3$, where $65$ correspond to the case D and $29$ to the N1. Now, we know by Lemma 4.5.1 in \cite{Kudo&Harashita} that any smooth, irreducible, genus $4$ curve $X=V(F_d,G)$ can be written in standard form. This is because condition (A3) is satisfied over $\F_7$ and $G$ can be reduced by a change of variables to the form of Definition~\ref{def:standardform}. This implies that the list of curves that we found in the case D actually includes all of the curves where the quadratic polynomial is degenerate. 
	
	\begin{cor}\label{cor:degenerate_curves_over_F7}
		There are, up to $\F_7$-isomorphism, exactly $65$ genus $4$ and $a$-number $3$, smooth, irreducible, non-hyperelliptic curves over $\F_7$, given as $V(F,G)$, where $G$ is a cubic homogeneous polynomial and $F$ is a degenerate quadratic form. In addition, there are, up to $\F_7$-isomorphism, at least $29$ genus $4$ curve in standard form over $\F_7$ with $a$-number $3$ where $F$ is non-degenerate.
	\end{cor}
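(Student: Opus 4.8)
The statement has two halves, and I would prove them by combining the exhaustive machine computation with one reduction result from \cite{Kudo&Harashita}. The count of exactly $65$ in the degenerate case rests on the claim that case D standard forms exhaust all degenerate-quadratic curves up to $\F_7$-isomorphism; the lower bound of $29$ in the non-degenerate case can be read directly off the partial search recorded in column 7(N1i') of Table~\ref{tab:isom_classes_p3_a3}.

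The crux is the first half. I would first observe that a canonically embedded genus $4$ non-hyperelliptic curve lies on a unique quadric $F$ (up to scalar), so the rank of $F$ --- equivalently, whether $F$ is degenerate ($F \sim F_d$) or not --- is an $\F_7$-isomorphism invariant of $X$. Hence the degenerate family is closed under isomorphism and disjoint from the non-degenerate one, and the two counts may be treated independently. Next, I invoke Lemma 4.5.1 of \cite{Kudo&Harashita}: because $\#\F_7 = 7 > 5$, condition (A3) holds, so any smooth irreducible $X = V(F_d,G)$ can be carried into case D standard form (Definition~\ref{def:standardform}) by a change of variables from the orthogonal similitude group of $F_d$, defined over $\F_7$. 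Therefore every degenerate-quadratic isomorphism class is represented by some tuple $\avec \in \Ds{7}$. An exhaustive sweep of $\Ds{7}$ --- forming $V(F_d,G_{\avec})$, discarding the tuples that fail smoothness, irreducibility or the genus condition, and computing $a = 4 - \rank(H)$ from the Hasse--Witt matrix via Proposition~\ref{prop:computation_hw} --- isolates the subfamily with $a = 3$. Collapsing these into $\F_7$-isomorphism classes in \magma\ yields the $65$ classes of column 7(D), and by the reduction step these are all of the degenerate-quadratic classes, giving the claimed exact count.

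The second half is immediate once the first is in place: the search over the non-degenerate standard forms was completed only for the subfamily of $\Ni{7}$ with $(b_1,b_2) = (0,0)$, which produced $29$ isomorphism classes with $a = 3$. These all have $F = F_1$ non-degenerate, hence are distinct from the $65$ above by the rank invariance, but since the subfamily is a proper subset of $\Ni{7} \cup \Nii{7} \cup \Ntwo{7}$ no completeness is available and one obtains only the bound ``at least $29$''.

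The main obstacle is therefore not analytic but one of completeness and computational trustworthiness. The delicate point is ensuring that the enumeration of $\Ds{7}$ is genuinely exhaustive and that the \magma\ isomorphism test neither fuses distinct classes nor splits a single one; the guarantee that no degenerate class lies outside $\Ds{7}$ is supplied entirely by the cited reduction lemma together with the numerical bound $\#\F_7 > 5$, so the correctness of the final count reduces to the correctness of the search and of the isomorphism-class computation.
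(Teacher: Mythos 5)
Your proposal is correct and takes essentially the same route as the paper: the paper likewise deduces exhaustiveness in the degenerate case from Lemma 4.5.1 of \cite{Kudo&Harashita} (condition (A3) holding since $\#\F_7>5$), then reads the count of $65$ off the exhaustive search of $\Ds{7}$ and the lower bound of $29$ off the partial search of $\Ni{7}$ with $(b_1,b_2)=(0,0)$, as recorded in Table~\ref{table:isom_classes_p7}. Your explicit observation that the canonical model of a non-hyperelliptic genus $4$ curve lies on a unique quadric, so that the rank of $F$ is an $\F_7$-isomorphism invariant separating the degenerate and non-degenerate families, is left implicit in the paper but is a correct and genuinely needed justification for the two counts to be well-posed and independent.
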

	\begin{proof}
		See Table~\ref{table:isom_classes_p7}.
	\end{proof}

	In the case D there are $1440$ curves with $a$-number $3$, which are divided into $65$ classes, $56$ of them have $p$-rank $1$ and $9$ have $p$-rank $0$. The subset of curves of the case N1i that we computed corresponds to those where the the cubic is of the form where $b_1=b_2=0$ (see Definition~\ref{def:standardform}). There are $432$ of these $G$ such that $V(F_1,G)$ is a smooth genus $4$ smooth curve with $a$-number $3$, only $16$ have $p$-rank $0$. These curves are distributed in $29$ ${\F}_7$-isomorphism class with $2$ of them having $p$-rank $0$ and $27$ of $p$-rank $1$.

	\begin{table}[h] 
		\caption{Isomorphism classes of genus $4$ and $a=3$ curves over $\F_7$.}
		\label{table:isom_classes_p7}
		\begin{center}
			\begin{tabular}{cccc}
				
				Case &$\#$ Isomorphism classes & $\#$ $p$-rank $1$& $\#$ $p$-rank $0$\\
				\hline 
				\hline 
				D&65 &56 &9 \\
				
				N1i'	& 29& 27&2 \\

				\hline 
				Total	&94 &83 &11 \\ \hline
			\end{tabular}
		\end{center}
	\end{table}
	
	\section{Cartier points }\label{sec:CartierPoints}
	
	\subsection{Definition and properties}\label{sect:CartierPoints_definitions and properties}

	Theorem~\ref{thm:Ekedahl} states that the genus of a superspecial curve in characteristic $p$ is bounded by $p(p-1)/2$. Ekedahl \cite{Ekedahl:Supersingular} bases the proof of this result on the fact that  a curve is superspecial if and only if its Jacobian is isomorphic to the product of supersingular elliptic curves (Oort, \cite{Oort75}). Baker, on the other hand, presents in \cite{Baker} an alternative proof which makes use of an equivalent definition: a curve is superspecial if and only if the Cartier operator annihilates $H^0(X,\Omega_X^1)$. The other component of his proof is the existence of linear systems of dimension $1$, associated to a certain type of points on $X$, defined as Cartier points. 
	
	\begin{definition}\label{def:cartier_points}
		A closed point $P$ of $X$ is said to be a Cartier point if the hyperplane of regular differentials vanishing at $P$ is stable under the Cartier operator.
	\end{definition}
	
	The following result yields Theorem~\ref{thm:Ekedahl} as a corollary.
	
	\begin{thm}[Baker \cite{Baker}, Theorem 2.8]\label{thm:baker}
		Let $X$ be a curve of genus $g$ over an algebraically closed field of characteristic $p$.
		\begin{enumerate}
			\item If $X$ has at least $p$ distinct Cartier points, no two of which differ by a $p$-torsion point on $J_X$, then $g \leq p(p-1)/2$.
			\item If $X$ is hyperelliptic of genus $g$, $p$ is odd, and some hyperelliptic branch point of $X$ is a Cartier point, then $g \leq (p-1)/2$.
		\end{enumerate}

	\end{thm}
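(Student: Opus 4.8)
My plan is to give both parts of Theorem~\ref{thm:baker} a common linear-algebraic reduction, then prove part (2) by an explicit computation and part (1) by a linear-system argument. For a closed point $P$ of $X$ let $W_P=H^0(X,\Omega_X^1(-P))\subseteq V:=\HZOX$ be the hyperplane of regular differentials vanishing at $P$, and let $\mathrm{ev}_P\colon V\to k$ be a linear functional with kernel $W_P$. By Definition~\ref{def:cartier_points}, $P$ is a Cartier point precisely when $\calC(W_P)\subseteq W_P$. Since $\calC$ is $1/p$-linear, this is equivalent to the vanishing of the $1/p$-linear functional $\mathrm{ev}_P\circ\calC$ on $\ker(\mathrm{ev}_P)$, i.e. $\mathrm{ev}_P\circ\calC$ is a semilinear multiple of $\mathrm{ev}_P$. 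I would linearize $\calC$ by the standard Frobenius twist so that all later bookkeeping is ordinary linear algebra over $k$.

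For part (2) I would compute directly. Write $X\colon y^2=f(x)$ with the branch point $P$ moved to $x=0$, so $f=xh$ with $h(0)\neq 0$, and take the basis $\omega_j=x^j\,dx/y$ for $0\le j\le g-1$. Since $\mathrm{ord}_P(x)=2$ one checks $W_P=\langle \omega_1,\dots,\omega_{g-1}\rangle$ with $\omega_0=dx/y\notin W_P$. Writing $dx/y=f^{(p-1)/2}y^{-p}\,dx$ and $f^{(p-1)/2}=\sum_k c_k x^k$, and using $\calC(u^p\eta)=u\,\calC(\eta)$ together with $\calC(x^{m}\,dx)=x^{(m+1)/p-1}\,dx$ when $p\mid m+1$ and $\calC(x^m\,dx)=0$ otherwise, the $\omega_0$-component of $\calC(\omega_j)$ is governed by the single coefficient $c_{p-1-j}$. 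Hence $P$ being a Cartier point forces $c_{p-2}=\dots=c_{p-g}=0$. On the other hand $f^{(p-1)/2}=x^{(p-1)/2}h^{(p-1)/2}$, so automatically $c_m=0$ for $m<(p-1)/2$ while $c_{(p-1)/2}=h(0)^{(p-1)/2}\neq 0$. If $g\ge (p+1)/2$ then $p-g\le (p-1)/2\le p-2$, so the index $(p-1)/2$ lies in the forced-vanishing range, contradicting $c_{(p-1)/2}\neq 0$. Therefore $g\le (p-1)/2$, giving part (2). This is where $p$ odd is used, so that $(p-1)/2$ is an integer.

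For part (1) I would globalize this mechanism. Each Cartier point $P_i$ kills a prescribed block of Cartier--Manin data, and I would assemble the $p$ points into a single effective divisor class and a special linear system on $X$. The hypothesis that no two of the $P_i$ differ by a $p$-torsion point of $J_X$ --- equivalently $pP_i\not\sim pP_j$ for $i\neq j$ --- is precisely what makes the associated logarithmic (Artin--Schreier) data at the points nondegenerate, so that the $p$ points impose independent conditions and the constructed class does not collapse. A Riemann--Roch and Clifford estimate applied to this special class should then yield $g\le p(p-1)/2$, modeled on the hyperelliptic computation where a single Cartier point already annihilated a whole block of coefficients.

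I expect the genuine obstacle to be part (1): the hyperelliptic computation is self-contained, but in the general case one must construct the correct special linear system out of the $p$ semilinear fixed-hyperplane conditions and prove that the $p$-torsion genericity hypothesis supplies the independence needed to close the dimension count at exactly $p(p-1)/2$. Carrying the $1/p$-linearity (the Frobenius twists) correctly through this construction, and pinning down which divisor class to feed into Riemann--Roch, is the delicate point.
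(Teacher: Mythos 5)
The paper itself does not prove this statement: Theorem~\ref{thm:baker} is quoted verbatim from Baker's paper (his Theorem 2.8) as background, so there is no internal proof to compare against and your proposal amounts to reproving Baker's result from scratch. On that footing, your part (2) is correct and complete. Moving the branch point to $x=0$, identifying $W_P=\langle\omega_1,\dots,\omega_{g-1}\rangle$, computing that the $\omega_0$-component of $\calC(\omega_j)$ is $c_{p-1-j}^{1/p}$ where $f^{(p-1)/2}=\sum_k c_kx^k$, and then playing the forced vanishing $c_{p-2}=\dots=c_{p-g}=0$ against $c_{(p-1)/2}=h(0)^{(p-1)/2}\neq 0$ (the index $(p-1)/2$ falls in the forced range exactly when $g\ge (p+1)/2$ and $p\ge 3$) is a clean, self-contained argument; this is essentially the standard hyperelliptic Cartier--Manin computation.

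Part (1), however, is a genuine gap, and not only because you leave it as a plan. The mechanism you propose to globalize is vacuous in precisely the case the theorem exists to handle. Part (1) must apply to superspecial curves --- the paper quotes it because it yields Ekedahl's Theorem~\ref{thm:Ekedahl} as a corollary: when $X$ is superspecial, $\calC\equiv 0$ on $H^0(X,\Omega_X^1)$, so every hyperplane is Cartier-stable and \emph{every} point of $X$ is a Cartier point (Definition~\ref{def:cartier_points}); moreover the $p$-rank is $0$, so $J_X[p](\kbar)=0$ and any $p$ distinct points automatically satisfy the no-$p$-torsion-difference hypothesis. In that situation ``each Cartier point kills a prescribed block of Cartier--Manin data'' kills nothing: the Cartier--Manin matrix is identically zero, stability of each $W_{P_i}$ imposes no condition, and there are no ``independent conditions'' from which to assemble a special linear system. (For the same reason the Type~1/Type~2 counting of Proposition~\ref{prop:bound_cartier_points} explicitly excludes superspecial curves.) All of the content of part (1) must instead be extracted from the hypothesis $pP_i\not\sim pP_j$ --- the non-existence of functions with divisor $p(P_i-P_j)$ --- combined with a construction you never specify: which divisor class enters Riemann--Roch, and which lemma converts Cartier-stability plus this hypothesis into a dimension estimate. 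Until that is supplied, part (1), and hence the theorem, is unproven by your argument.
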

	
	\begin{notation}\label{remark:on_notation}
		Let $k$ be a field of characteristic $p$. Let $X$ be a non-hyperelliptic curve of genus $g$ over $k$ embedded in $\PP(H^0(X,\Omega_X^1))=\PP^{g-1}$ by a basis \linebreak$\mathcal{B}'=\{\omega_1,\ldots, \omega_g\}$ of $H^0(X,\Omega_X^1)$. Suppose $x_1,\ldots, x_g$ are the coordinates of $\PP^{g-1}$ given by this basis and that $\mathcal{B}$ is the basis of $H^1(X,\mathcal{O}_X)$ dual to $\mathcal{B}'$. Given a point $P=[a_1:\cdots :a_g] $ of $X$ we denote by $\bv_P$ the vector $(a_1,\ldots, a_g)^T$ in $H^1(X,\mathcal{O}_X)$ expressed in terms of $\calB$.
	\end{notation}
	From here on, we consider the Hasse--Witt matrix $H$ of $X$ to be in terms of the basis $\calB$, unless otherwise stated.

	\begin{prop}\label{prop:find-c-pts-with-HW}
		Let $X$ be a non-hyperelliptic curve of genus $g$ over $k$ embedded in $\PP^{g-1}$ as in Notation~\ref{remark:on_notation}. A point $P=[a_1:\cdots :a_g] $ of $X(\overline{k})$ is a Cartier point if and only if there exists $c\in \overline{k}$ such that
		\begin{equation}\label{eq:Hv^p=cv_is CP}
		H\bv_P^{(p)}=c\bv_P,
		\end{equation}
		where $\bv_P^{(i)}$ indicates that each entry of the vector is raised to the $i$-th power.
	\end{prop}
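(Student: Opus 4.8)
The plan is to translate the geometric condition of Definition~\ref{def:cartier_points} into the linear-algebraic statement \eqref{eq:Hv^p=cv_is CP} by routing everything through the perfect pairing between $\HOOX$ and $\HZOX$. First I would pin down the hyperplane of regular differentials vanishing at $P$. Writing each $\omega_i = f_i\,dt$ in a local uniformizer $t$ at $P$, the canonical coordinates of $P$ are $a_i = f_i(P)$, so a differential $\omega = \sum_i \lambda_i\omega_i$ vanishes at $P$ exactly when $\sum_i \lambda_i a_i = 0$. Because $\calB$ and $\calB'$ are dual bases, $\langle \bv_P,\omega\rangle = \sum_i a_i\lambda_i$, and hence the hyperplane $V_P \subseteq \HZOX$ of differentials vanishing at $P$ is precisely the annihilator $V_P = \{\omega : \langle \bv_P,\omega\rangle = 0\}$. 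Since $P$ is a point of projective space, $\bv_P \neq 0$, and since the canonical system is base-point-free the functional $\bv_P$ is nonzero on $\HZOX$, so $\dim V_P = g-1$.

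Next I would rewrite Cartier-stability. By definition $P$ is a Cartier point iff $\calC(V_P)\subseteq V_P$, i.e. $\langle \bv_P,\calC\omega\rangle = 0$ whenever $\langle \bv_P,\omega\rangle = 0$. Applying the adjunction $\langle \calF_X\xi,\omega\rangle = \langle \xi,\calC\omega\rangle^p$ with $\xi = \bv_P$, and using that $t\mapsto t^p$ is injective on $\kbar$, this is equivalent to the condition that $\langle \calF_X\bv_P,\omega\rangle = 0$ whenever $\langle \bv_P,\omega\rangle = 0$; that is, $\calF_X\bv_P$ annihilates $V_P$. As the pairing is perfect and $\dim V_P = g-1$, the annihilator of $V_P$ inside $\HOOX$ is exactly the one-dimensional line $\kbar\cdot\bv_P$ (which contains $\bv_P$ by construction). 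Thus $P$ is a Cartier point iff $\calF_X\bv_P \in \kbar\cdot\bv_P$, i.e. iff there exists $c\in\kbar$ with $\calF_X\bv_P = c\,\bv_P$.

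Finally, recalling that the Hasse--Witt matrix represents $\calF_X$ by $\calF_X\bv = H\bv^{(p)}$ (since $\calF_X$ is $p$-linear), the condition $\calF_X\bv_P = c\,\bv_P$ becomes the matrix equation $H\bv_P^{(p)} = c\,\bv_P$ of \eqref{eq:Hv^p=cv_is CP}, which completes both directions of the equivalence simultaneously.

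I expect the main obstacle to be the bookkeeping around semilinearity: one must check that the $p$-th power in the adjunction formula genuinely drops out (it does, because $x^p = 0 \iff x = 0$ over $\kbar$) and that $\calC(V_P)$ is again a subspace even though $\calC$ is only $1/p$-linear. The conceptual crux is the identification of the canonical coordinates $a_i$ with the values $f_i(P)$ and the resulting duality $V_P = \{\omega : \langle \bv_P,\omega\rangle = 0\}$; once that and the perfectness of the pairing are in hand, the remainder is a one-line dimension count for the annihilator.
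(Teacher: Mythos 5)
Your proof is correct and takes essentially the same route as the paper's: both identify the hyperplane of regular differentials vanishing at $P$ with the annihilator of $\bv_P$ under the perfect pairing, and then convert Cartier-stability into the eigen-equation $H\bv_P^{(p)}=c\bv_P$ via the duality between $\calC$ and $\calF_X$. The only difference is that you make explicit the step the paper compresses into ``by duality'' --- the adjunction $\langle\calF_X\xi,\omega\rangle=\langle\xi,\calC\omega\rangle^p$, the injectivity of $t\mapsto t^p$ on $\kbar$, and the dimension count showing the annihilator of the hyperplane is exactly the line $\kbar\cdot\bv_P$.
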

	
	\begin{proof}
		Since $X$  is embedded in $\PP^{g-1}$ by $\{\omega_1,\ldots, \omega_g\}$ then $\omega_i(P)=a_i$, for  \linebreak $i=1,\ldots, g$. A regular $1$-form $\omega=b_1\omega_1+\cdots + b_g\omega_g$ vanishes at $P$ if and only if 
		\begin{equation}
		b_1a_1+\cdots + b_ga_g=0.
		\end{equation}
		Then the hyperplane of $1$-forms vanishing at $P$ is 
		\begin{equation}
		L_P:= \left\lbrace (b_1,\ldots ,b_g)^T:  b_1a_1+\cdots + b_ga_g=0\right\rbrace.
		\end{equation}
		Let $L_P^0$ be the annihilator of $L_P$. We know that this is a $1$-dimensional subspace of ${H^0(X,\Omega_X^1)}^{*}\cong H^1(X,\calO_X)$, so it is generated by the vector $\bv=(a_1,\ldots, a_g)^T$.
		
		Now, $L_P$ is stable under the action of $\calC$ if $\mathcal{C}(\omega)\in L_P$, for every $\omega \in L_P$. By duality, this is equivalent to $\calF_X(\bv_P)=c\bv_P$, that is, $H\bv_P^{(p)}=c\bv_P.$
		
	\end{proof}
	
	\begin{definition}\label{def:T1andT2}
		
		We say that a Cartier point $P\in X$ as above is of\textit{ Type $1$} if $c=0$ and of \textit{Type $2$} otherwise. 
		
	\end{definition}

	Notice that if $k$ is not algebraically closed, then the Cartier points of $X/k$ might not be defined over $k$, but over some extension. In general we consider Cartier points as points in $X(\overline{k})$.  The next lemma gives us a way to find Type $1$ points.

	\begin{lem}\label{lem:Type_1Points_H1/pv=0}
		A point $P$ is a Type $1$ Cartier point of $X$ if and only if \linebreak$H^{(1/p)}\bv_P=0$.
	\end{lem}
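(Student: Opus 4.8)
The plan is to reduce the statement to Proposition~\ref{prop:find-c-pts-with-HW} and then exploit the perfectness of $k$. By that proposition together with Definition~\ref{def:T1andT2}, a point $P$ is a Type $1$ Cartier point precisely when the Cartier-point condition $H\bv_P^{(p)}=c\bv_P$ holds with $c=0$, i.e. when $H\bv_P^{(p)}=0$. Thus the entire content of the lemma is the equivalence $H\bv_P^{(p)}=0 \iff H^{(1/p)}\bv_P=0$, and I would isolate this as the single thing to verify.

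The key idea is that on a perfect field of characteristic $p$ the entrywise $p$-power map is a field automorphism, so its inverse, the entrywise $1/p$-power map, is well defined on $\kbar$ and is itself a ring homomorphism. I would first record the elementary identity that, for any matrix $A$ and vector $\bw$ over $\kbar$, one has $(A\bw)^{(1/p)} = A^{(1/p)}\bw^{(1/p)}$; this follows entry by entry, since raising a sum of products $\sum_j A_{ij}w_j$ to the $1/p$ power distributes additively and multiplicatively in characteristic $p$. Applying this with $A=H$ and $\bw=\bv_P^{(p)}$, and using $(\bv_P^{(p)})^{(1/p)}=\bv_P$, yields $(H\bv_P^{(p)})^{(1/p)} = H^{(1/p)}\bv_P$.

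Since the $1/p$-power map is a bijection on $\kbar^{\,g}$ fixing $0$, a vector vanishes if and only if its $1/p$-power does. Hence $H\bv_P^{(p)}=0$ if and only if $(H\bv_P^{(p)})^{(1/p)}=0$, which by the displayed identity is exactly $H^{(1/p)}\bv_P=0$; combining this with the first paragraph finishes the argument. I do not expect a genuine obstacle here. The only point requiring care is the bookkeeping around the semilinearity of $\calF_X$, namely remembering that $H$ acts as $\bv\mapsto H\bv^{(p)}$ rather than by ordinary left multiplication, and confirming that it is precisely the perfectness of $k$ that legitimizes passing to $1/p$-powers. A small additional check worth folding in is that the condition is independent of the chosen homogeneous representative of $P$, since rescaling $\bv_P$ by $\lambda$ rescales $H^{(1/p)}\bv_P$ by $\lambda$ and therefore preserves the vanishing.
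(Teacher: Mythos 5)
Your proposal is correct and follows the same route as the paper: identify Type $1$ points with the condition $H\bv_P^{(p)}=0$ via Proposition~\ref{prop:find-c-pts-with-HW}, then apply the inverse of the $p$-th power Frobenius entrywise to obtain the equivalence with $H^{(1/p)}\bv_P=0$. You simply make explicit the details (the homomorphism identity $(H\bv_P^{(p)})^{(1/p)}=H^{(1/p)}\bv_P$, injectivity of the $1/p$-power map, and independence of the homogeneous representative) that the paper's two-line proof leaves implicit.
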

	\begin{proof}
		By definition $P$ is a Type $1$ point of $X$ if and only if $H\bv^{(p)}=0$. By applying the inverse of the $p$-th Frobenius morphism we see that this is equivalent to $H^{(1/p)}\bv_P=0$.
	\end{proof}

	\begin{cor}\label{cor:find_type1_points_Hv=0_Fp}
		Supose $X$ is defined over $\F_p$. Then $P\in X$ is a Type $1$ point if and only if $H\bv_P=0$.
	\end{cor}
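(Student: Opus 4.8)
The plan is to deduce this directly from Lemma~\ref{lem:Type_1Points_H1/pv=0}, using the single observation that over $\F_p$ the inverse-Frobenius twist $H^{(1/p)}$ coincides with $H$ itself. First I would invoke Lemma~\ref{lem:Type_1Points_H1/pv=0}, which already characterizes Type $1$ points by the equation $H^{(1/p)}\bv_P=0$. So the entire task reduces to showing that $H^{(1/p)}=H$ under the hypothesis that $X$ is defined over $\F_p$.

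To establish this, I would argue that every entry of the Hasse--Witt matrix $H$ lies in $\F_p$. Indeed, by Proposition~\ref{prop:computation_hw} the entries of $H$ are among the coefficients $c_{i_1,i_2,i_3,i_4}$ of $(FG)^{p-1}$; since $F$ and $G$ are defined over $\F_p$, the product $(FG)^{p-1}$ has coefficients in $\F_p$, and hence so does each entry $a_{ij}$ of $H$. By Fermat's little theorem, $a_{ij}^p=a_{ij}$ for every entry, so the Frobenius $a\mapsto a^p$ fixes each $a_{ij}$, and therefore so does its inverse; that is, $a_{ij}^{1/p}=a_{ij}$, giving $H^{(1/p)}=H$.

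Combining the two observations finishes the argument: by Lemma~\ref{lem:Type_1Points_H1/pv=0}, $P$ is a Type $1$ point if and only if $H^{(1/p)}\bv_P=0$, and since $H^{(1/p)}=H$ this holds if and only if $H\bv_P=0$. I do not expect any real obstacle here, as the statement is essentially a specialization of the preceding lemma; the only step requiring care is the verification that $H$ is defined over $\F_p$, which is immediate from the explicit formula in Proposition~\ref{prop:computation_hw}.
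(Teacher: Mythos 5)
Your proposal is correct and matches the paper's intended argument: the corollary is stated there without proof precisely because it follows from Lemma~\ref{lem:Type_1Points_H1/pv=0} once one notes that a curve defined over $\F_p$ has Hasse--Witt matrix with entries in $\F_p$, so that $H^{(1/p)}=H$. Your verification of that last point via Proposition~\ref{prop:computation_hw} is exactly the missing detail the paper leaves implicit.
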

	
	Suppose that $X$ is defined over $\F_{q}$, with $p^r=q$ for some positive integer $r$. For a point $Q\in X$ we denote by $\sigma(Q)$ the action of the $r$-th power of the Frobenius morphism on $Q$.
	\begin{lem}\label{lem:frob_cartierpoint}
		Let $Q$ be a Cartier point of $X/\F_q$ and let $P=\sigma(Q)$. Then $Q$ is a Cartier point of $X$ if and only if $P$ is a Cartier point of $X$. Furthermore, if $H\bv_Q^{(p)}=c\bv_Q$, then $H\bv_P^{(p)}=c^{q}\bv_P$.
	\end{lem}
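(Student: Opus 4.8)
The plan is to exploit the fact that $\sigma$ is nothing but the $q$-power Frobenius acting on coordinates, together with the observation that the Hasse--Witt matrix $H$ of a curve defined over $\F_q$ is fixed by the entrywise $q$-power map. Writing $Q=[a_1:\cdots:a_g]$, the $r$-th power of the Frobenius morphism sends $Q$ to $\sigma(Q)=[a_1^q:\cdots:a_g^q]$, so with the natural choice of representative we have $\bv_P=\bv_Q^{(q)}$. Since $q=p^r$, the entrywise $q$-power map $(\cdot)^{(q)}$ is a ring homomorphism of $\overline{k}$ (an iterate of the absolute Frobenius), and because the entries of $H$ lie in $\F_q$ it satisfies $H^{(q)}=H$.

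With these two facts in hand, I would apply $(\cdot)^{(q)}$ to the Cartier equation $H\bv_Q^{(p)}=c\bv_Q$ supplied by Proposition~\ref{prop:find-c-pts-with-HW}. Because the $q$-power map is a ring homomorphism, it distributes over the matrix product, giving
\[
\left(H\bv_Q^{(p)}\right)^{(q)}=H^{(q)}\,\bv_Q^{(pq)}=H\,\bv_Q^{(pq)},
\]
while the right-hand side becomes $\left(c\bv_Q\right)^{(q)}=c^{q}\bv_Q^{(q)}$. Recognizing that $\bv_Q^{(pq)}=\bigl(\bv_Q^{(q)}\bigr)^{(p)}=\bv_P^{(p)}$ and $\bv_Q^{(q)}=\bv_P$, this reads $H\bv_P^{(p)}=c^{q}\bv_P$. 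By Proposition~\ref{prop:find-c-pts-with-HW} this is precisely the statement that $P$ is a Cartier point, and it simultaneously delivers the \emph{furthermore} claim that its associated constant is $c^{q}$.

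For the converse I would use that $\sigma$ is a bijection on $X(\overline{k})$—the defining equations of $X$ have coefficients in $\F_q$, so Frobenius permutes the closed points—and that the $q$-power map is invertible on $\overline{k}$. Every step above is then reversible: applying $(\cdot)^{(1/q)}$ to $H\bv_P^{(p)}=c^{q}\bv_P$ recovers $H\bv_Q^{(p)}=c\bv_Q$, so $P$ being a Cartier point forces $Q$ to be one as well.

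The only genuine subtlety, and the step I would be most careful about, is the projective scaling: the vector $\bv_P$ is defined only up to a scalar, and rescaling $\bv_P$ by $\lambda$ multiplies its Cartier constant by $\lambda^{p-1}$. The clean identity $c^{q}$ therefore holds for the specific representative $\bv_P=\bv_Q^{(q)}$, and I would state this normalization explicitly so that the \emph{furthermore} is unambiguous; for the mere equivalence of being a Cartier point the scaling is irrelevant, since the existence of a constant $c$ is itself scale-invariant.
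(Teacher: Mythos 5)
Your proposal is correct and follows essentially the same route as the paper's proof: both hinge on the normalization $\bv_P=\bv_Q^{(q)}$, the fact that $H^{(q)}=H$ because $H$ is defined over $\F_q$, and applying the entrywise $q$-power map (which is invertible, giving both directions) to the equation $H\bv_Q^{(p)}=c\bv_Q$. Your explicit remark about the projective-scaling subtlety is a welcome clarification of what the paper compresses into ``after scaling, we can assume $\bv_Q^{(q)}=\bv_P$,'' but it is not a different argument.
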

	\begin{proof}
		
		First we note that $P$ is also a point on $X$, because $X=V(F,G)$ and $F$ and $G$ are defined over $\F_q$. Let $H$ be the Hasse--Witt matrix of $X$. After scaling, we can assume that $\bv_Q^{(q)}=\bv_P$. Then

		$$\begin{array}{ccl}
		H\bv_Q^{(p)}=c\bv_Q& \Leftrightarrow&(H\bv_Q^{(p)})^{(q)}=(c\bv_Q )^{(q)}\\
		
		&\Leftrightarrow& H(\bv_Q^{(q)})^{(p)}=c^{q}(\bv_Q)^{(q)}\\
		
		&\Leftrightarrow & H\bv_P^{(p)}=c^{q}\bv_P,
		\end{array}$$

		where the second equivalence is true because $X$ is defined over $\F_q$ then so is $H$. 
	\end{proof}

	We apply Lemma~\ref{lem:frob_cartierpoint} to reduce the search of Cartier points to a computation of eigenvectors. 
	
	\begin{lem}\label{lem:T_points_are_eigenvectors}
		Suppose $X$ is defined over $\F_p$ with  Hasse--Witt matrix $H$. Let $Q$ be a Cartier point of $X$ defined over $\F_{p^e}$ for some positive integer $e$. There exists $\lambda \in \F_{p}$ such that $H^e\bv_Q=\lambda\bv_Q$. 
	\end{lem}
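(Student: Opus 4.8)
The plan is to iterate the defining relation of a Cartier point $e$ times and to recognize the scalar that appears as a norm from $\F_{p^e}$ down to $\F_p$. By Proposition~\ref{prop:find-c-pts-with-HW} there is a scalar $c\in\overline{\F}_p$ with $H\bv_Q^{(p)}=c\bv_Q$, and since $Q$ is defined over $\F_{p^e}$ I may choose a representative whose coordinates all lie in $\F_{p^e}$, so that $\bv_Q^{(p^e)}=\bv_Q$. Two observations drive the computation: because $H$ has entries in $\F_p$, every power satisfies $(H^j)^{(p)}=H^j$; and because the coordinates of $\bv_Q$ lie in $\F_{p^e}$, applying the $p$-power Frobenius $e$ times returns $\bv_Q$.

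First I would prove by induction on $j$ that
\[
H^j\bv_Q^{(p^j)} = c^{\,1+p+\cdots+p^{j-1}}\,\bv_Q .
\]
The case $j=1$ is the Cartier relation. For the inductive step I raise the identity for $j$ to the $p$-th power; since $(H^j)^{(p)}=H^j$ this gives $H^j\bv_Q^{(p^{j+1})}=c^{\,p(1+\cdots+p^{j-1})}\bv_Q^{(p)}$, and multiplying on the left by $H$ and substituting $H\bv_Q^{(p)}=c\bv_Q$ produces the identity for $j+1$ with exponent $1+p+\cdots+p^{j}$. (This is exactly repeated application of Lemma~\ref{lem:frob_cartierpoint} along the Frobenius orbit of $Q$.) Taking $j=e$ and using $\bv_Q^{(p^e)}=\bv_Q$ yields
\[
H^e\bv_Q = c^{\,s_e}\bv_Q, \qquad s_e=1+p+\cdots+p^{e-1}=\frac{p^e-1}{p-1},
\]
so $\lambda:=c^{\,s_e}$ is the desired scalar.

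It remains to show $\lambda\in\F_p$. First I would check $c\in\F_{p^e}$: applying the $p^e$-power Frobenius to $H\bv_Q^{(p)}=c\bv_Q$ and using $H^{(p^e)}=H$ together with $\bv_Q^{(p^e)}=\bv_Q$ returns $H\bv_Q^{(p)}=c^{p^e}\bv_Q$, whence $c=c^{p^e}$. Then the exponent $s_e=(p^e-1)/(p-1)$ is precisely the exponent defining the norm of the cyclic extension $\F_{p^e}/\F_p$, so $\lambda=c^{\,s_e}=N_{\F_{p^e}/\F_p}(c)\in\F_p$ (with $\lambda=0$ in the Type~$1$ case $c=0$). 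The main obstacle is this final step: the iteration by itself only places $\lambda$ in $\F_{p^e}$, since $\lambda$ is an eigenvalue of the $\F_p$-matrix $H^e$ for an eigenvector defined over $\F_{p^e}$. The sharper conclusion $\lambda\in\F_p$ is genuine content, and the point is to notice that the exponent manufactured by the iteration is exactly the norm exponent; everything else is bookkeeping with the $p$-linearity of Frobenius and the $\F_p$-rationality of $H$.
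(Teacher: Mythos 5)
Your proof is correct and takes essentially the same route as the paper's: both iterate the Cartier relation $e$ times, using the $\F_p$-rationality of $H$ and the Frobenius action on $Q$, to arrive at $H^e\bv_Q=c^{(p^e-1)/(p-1)}\bv_Q$, and then conclude that this particular exponent forces $\lambda\in\F_p$. The only cosmetic differences are that the paper tracks the orbit points $\sigma^i(Q)$ via repeated use of Lemma~\ref{lem:frob_cartierpoint} (with a scaling convention $\bv_{P_i}^{(p)}=\bv_{P_{i+1}}$) while you run a clean induction on a single vector, and the paper phrases the final step as ``$\lambda$ is a $(p-1)$-st root of unity'' where you identify $\lambda$ as the norm $N_{\F_{p^e}/\F_p}(c)$ --- the same fact; your explicit verification that $c\in\F_{p^e}$ is a small point the paper asserts without proof.
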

	
	\begin{proof}
		Since $Q$ is a Cartier point in of $X$ then $H\bv_Q^{(p)}=c\bv_Q$ for some $c\in \Fbar_p$. Now, since $Q\in X(\F_{p^e})$, then $c\in\F_{p^e}$.	Let $P_i:=\sigma^i(Q)$, then $P_1, P_2, \ldots, P_e=Q$ are distinct Cartier points. Also, after scaling we can assume that $\bv_{P_i}^{(p)}=\bv_{P_{i+1}} $. After applying $i$ times the result from Lemma~\ref{lem:frob_cartierpoint} with $q=p$, we get that $H\bv_{P_i}^{(p)}=c^{p^i}\bv_{P_i} $. To ease notation, we write $\bv_i$ instead of $\bv_{P_i}$. Then
		\begin{eqnarray*}
			H^e\bv_e=&H^e\bv_{e-1}^{(p)}=H^{e-1}\left(H\bv_{e-1}^{(p)}\right)= H^{e-1}\left(c^{p^{e-1}}\bv_{e-1}\right)= c^{p^{e-1}}H^{e-1}\bv_{e-2}^{(p)}.
		\end{eqnarray*} 
		By an inductive process, we get that 
		$H^e\bv_Q= c^{p^{e-1}+p^{e-2}\cdots +p+1}\bv_Q=c^{\frac{p^e-1}{p-1}}\bv_Q$. Let $\lambda:=c^{\frac{p^e-1}{p-1}}$. Since $c\in \F_{p^e}$ then $\lambda$ is a $(p-1)$-root of unity, if $c\neq 0$ and $\lambda=0$ if $c=0$. Hence $\lambda \in \F_p$.
		
	\end{proof}

	Baker provides in \cite{Baker} an upper bound for the number of Cartier points on a smooth irreducible curve that is not ordinary nor superspecial.

	\begin{prop} [Baker, \cite{Baker} Prop. 3.3]\label{prop:bound_cartier_points}
		Let $X$ be a smooth, irreducible curve of genus $g$ with $p$-rank $f$, which is not ordinary nor superspecial. 
		\begin{enumerate}
			\item The number of Type $2$ points on $X$ is bounded by
			\begin{align*}\label{eq:bound-on-CP}
			b:=b_{g,p,f,\delta_X}=	\min \left(2g -2, \delta_X\frac{p^f-1}{p-1} \right),
			\end{align*}
			where $\delta_X$ is $2$ if $X$ is hyperelliptic and $1$ otherwise.
			\item The number of Type $1$ points on $X$ is bounded by $2g-2$. Furthermore, if the $a$-number of $X$ is $g-1$ then there is at least one Type $1$ Cartier point on $X$.
		\end{enumerate}
	\end{prop}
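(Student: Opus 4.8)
The plan is to read off both Type~$1$ and Type~$2$ Cartier points from the semilinear eigenvalue equation of Proposition~\ref{prop:find-c-pts-with-HW}, namely $H\bv_P^{(p)}=c\bv_P$, and to bound the resulting loci inside the canonical image of $X$ in $\PP^{g-1}$. The common mechanism behind both $2g-2$ bounds is that the eigenvectors under consideration are confined to a \emph{proper} linear subspace of $\PP^{g-1}$: since $X$ is neither ordinary nor superspecial we have $1\le a\le g-1$ and $0\le f\le g-1$, so the relevant subspace always sits inside a hyperplane, and a nondegenerate canonical curve of degree $2g-2$ meets any hyperplane not containing it in at most $2g-2$ points. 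In the hyperelliptic case the canonical map is $2:1$ onto a rational normal curve of degree $g-1$, and the $g-1$ points of a hyperplane section pull back to $2g-2$ points of $X$, so the same bound holds.

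For Type~$1$ points I would invoke Lemma~\ref{lem:Type_1Points_H1/pv=0}: $P$ is a Type~$1$ point exactly when $\bv_P\in\ker H^{(1/p)}$. Because raising entries to the $p$-th power is a field automorphism of $\Fbar_p$, we have $\rank H^{(1/p)}=\rank H=g-a$, so $\ker H^{(1/p)}$ has dimension $a$ and its projectivization $\PP(\ker H^{(1/p)})$ is a linear subspace of dimension $a-1\le g-2$. Thus the Type~$1$ points lie in $X\cap\PP(\ker H^{(1/p)})$, a subset of a hyperplane section, giving the bound $2g-2$. When $a=g-1$ this kernel has dimension $g-1$, so $\PP(\ker H^{(1/p)})$ is a hyperplane; by the projective dimension theorem a curve in $\PP^{g-1}$ meets every hyperplane, so at least one Type~$1$ point exists.

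For Type~$2$ points ($c\neq 0$) I would first normalize the eigenvalue. Rescaling $\bv_P\mapsto\lambda\bv_P$ sends $c$ to $\lambda^{p-1}c$, and since every element of $\Fbar_p^{\times}$ is a $(p-1)$-th power we may choose $\lambda$ with $\lambda^{p-1}c=1$; hence every Type~$2$ point has a representative in
\[
W:=\{\bv : H\bv^{(p)}=\bv\}.
\]
The set $W$ is an $\F_p$-vector space, because $\alpha^p=\alpha$ for $\alpha\in\F_p$ makes $\bv\mapsto H\bv^{(p)}$ additive and $\F_p$-linear on its fixed locus. The crucial point is that $\dim_{\F_p}W=f$: the operator $\bv\mapsto H\bv^{(p)}$ stabilizes on a subspace of $\Fbar_p$-dimension equal to the stable rank $f$ of Frobenius \eref{eq:p-rank-stablerank}, and on this semisimple part a bijective $p$-linear operator admits an $\F_p$-basis of fixed vectors. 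Two nonzero vectors of $W$ are proportional over $\Fbar_p$ only if they differ by a scalar in $\F_p^{\times}$, so the projective points they span are exactly the $\tfrac{p^f-1}{p-1}$ points of $\PP(W)\cong\PP^{f-1}(\F_p)\subset\PP^{g-1}$. Every Type~$2$ point maps to one of these, and the canonical map is injective (non-hyperelliptic) or at most $2:1$ (hyperelliptic), yielding the bound $\delta_X\frac{p^f-1}{p-1}$; combined with the $2g-2$ bound this gives the stated minimum.

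The main obstacle is the identity $\dim_{\F_p}W=f$, which is the heart of the Type~$2$ estimate. It requires the structure theory of Frobenius-semilinear operators: decomposing $\Fbar_p^{\,g}$ into the nilpotent part and the bijective (semisimple) part of $\bv\mapsto H\bv^{(p)}$, and invoking a Lang/Hilbert~90-style argument that a bijective $p$-linear operator on an $f$-dimensional space is the $\Fbar_p$-extension of an $f$-dimensional $\F_p$-form. The remaining work is bookkeeping: verifying that the rescaling does not identify distinct Cartier points, and confirming the hyperelliptic factor $\delta_X=2$ by analyzing the fibers of the canonical $2:1$ map over the points of $\PP(W)$.
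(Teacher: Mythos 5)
Your proposal is correct and follows essentially the same route as the paper's own discussion (which in turn follows Baker): Type $1$ points are confined to the projectivized kernel of $H^{(1/p)}$, hence to a hyperplane section of the degree-$(2g-2)$ canonical curve, and Type $2$ points are rescaled by a $(p-1)$-th root to Frobenius-fixed vectors, whose fixed space has $\F_p$-dimension $f$ and therefore yields at most $\frac{p^f-1}{p-1}$ projective points, themselves lying in a hyperplane. The only differences are ones of completeness: you spell out the semilinear (Lang/Hilbert 90) argument for $\dim_{\F_p}W=f$, the existence statement when $a=g-1$ via the projective dimension theorem, and the hyperelliptic factor $\delta_X=2$, all of which the paper's sketch either takes for granted or omits by restricting to $\delta_X=1$.
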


	Let us explore the geometric meaning of this bound, following the proof of Proposition~\ref{prop:bound_cartier_points} in \cite{Baker}. We will only be concerned with non-hyperelliptic curves, so here $\delta_X=1$.
	
	We know by Proposition~\ref{prop:find-c-pts-with-HW} that $P\in X(\kbar)$ is a Cartier point if and only if there exists $c \in \kbar$ such that $H\bv_P^{(p)}=c\bv_P$. If $c=0$ then this equation is equivalent to $H\bv_P^{(1/p)}=0$. Hence the Type $1$ points are those in the intersection of $X(\kbar)$ and the subspace spanned by the kernel of $H^{(1/p)}$ in $\PP^{g-1}$.  
	This subspace is linear and has codimension at least $1$, so it is contained in a hyperplane. From where we conclude that the number of points in the intersection is at most the degree of the curve $2g-2$. 
	
	On the other hand, if $c\neq 0$ then we can rewrite Equation \eref{eq:Hv^p=cv_is CP} as $H\bw^{(p)}=\bw$ by setting $\bw=\lambda\bv_P$
	for $\lambda$ equal to some $(p-1)$-th root of $c^{-1}$. Now the element of $H^1(X,\calO_X)$ given by $\bw$ is fixed by the Frobenius operator. By definition the $p$-rank $f$ is the dimension of the subspace of $\HOOX$ where $\calF$ is bijective. So there are $p^f-1$ non trivial solutions, that yield at most $\frac{p^f-1}{p-1}$ Type $2$ points. After maybe doing a base extension on $X$ one can choose a basis of $H^0(X,\Omega_X^1)$ given by $\{\xi_1, \ldots, \xi_g\}$ such that $\calC(\xi_i)=\xi_i$ for $1\leq i\leq f$. Assume that the coordinates of a Type $2$ point are given by  this basis. Then,some coordinate, say $x_g$ of every such point must be zero, since $f<g$. Then the point lies on the hyperplane $x_g=0$. Again, there can only be $2g-2$ such points, so the number of Type $2$ points is $\min \{ 2g-2,\frac{p^f-1}{p-1} \}$.

	As a direct consequence of the existence of the upper bound on the number of Cartier points, we also get a bound on the degree of the field of definition of the point. In particular, we have Corollary~\ref{cor:degree_of_definition_C_points} for the case $k=\F_q$.

	\begin{cor}\label{cor:degree_of_definition_C_points}
		Let $X$ be a genus $g$ non-hyperelliptic curve that is not ordinary nor superspecial, defined over $\F_q$. 
		\begin{itemize}
			\item If $P$ is a Type $1$ Cartier point of $X$, then $P\in X(\F_{q^e})$ with $1\le e \le 2g-2$.
			\item If $P$ is a Type $2$ Cartier point of $X$, then $P\in X(\F_{q^e})$ with $1\le e \le b$, where $b$ is as in Proposition~\ref{prop:bound_cartier_points}.
		\end{itemize}

	\end{cor}
	\begin{proof}
		Let $P$ be a Cartier point of $X$. Let $e$ be the minimum positive integer such that $P\in X(\F_{q^e})$. By Lemma~\ref{lem:frob_cartierpoint}, the $e$ distinct points \linebreak $\{P, \sigma(P), \sigma^2(P), \ldots, , \sigma^{e-1}(P)\}$ are all Cartier points of the same type. By Proposition~\ref{prop:bound_cartier_points}, there are at most $2g-2$ Type $1$ points and $b$ Type $2$ points. Hence, if $P$ is a Type $1$ point (resp. Type $2$), then $e\leq 2g-2$ (resp. $e\leq b$).

	\end{proof}
	
	\subsubsection{Type $1$ points in the case $a=g-1$} \label{sec:T1_a=g-1}
	The behavior of the Cartier points when the $a$-number is $g-1$ has an additional feature which is the multiplicity. In this case, the subspace $S$ generated by the kernel of $H^{(1/p)}$ is a hyperplane, then assuming $X\nsubseteq S$, the intersection $X \cap S$ is proper. Then we can define the intersection multiplicity of $P$ in $ X \cap S$, that is, of the Type $1$ Cartier points. 
	
	If $X\cap S=\{P_1,\ldots, P_n\}$ and $m_i$ denotes the multiplicity of the point $P_i$, then $\sum_{i=1}^{n}m_i=2g-2 $. Hence the possible multiplicity distributions of the Type 1 points correspond to the partitions of $2g-2$. Both the multiplicity and the degree of $P$ are preserved under $\sigma$, so if $d_i$ is the degree of the point $P_i$, then $P_i, \sigma(P_i),\sigma^2(P_i),\ldots, \sigma^{d_i-1}(P_i)$ are $d_i$ distinct points of the same degree and multiplicity.

	\subsection{Cartier points on genus $4$ curves}\label{sec:CartierPoints_g4_curves}

	Suppose $q=p^r$ for some positive integer $r$ and  let $X=V(F,G)$ be a non-ordinary and non-superspecial smooth, irreducible genus $4$ non-hyperelliptic curve over $\F_q$.  We want to determine the sharpness of the bound given by Baker in Proposition~\ref{prop:bound_cartier_points} for the number of Cartier points on $X$. We will first make some remarks about the possible bounds depending on the $a$-number  and the $p$-rank. 
	
	\begin{cor}[Type $2$ Cartier points and the $p$-rank]\label{cor:type2CP}
		Let $X$ be a non-hyperelliptic curve of genus $4$ defined over $\F_q$ with $p$-rank $f$.  
		\begin{enumerate}
			\item [(i)] There are at most six Type $2$ points on $X$ and they are defined over $\F_{q^e}$ for some $1\le e \le 6$. Moreover:
			\item[(ii)] if $f=0$, then there are no Type $2$ points;
			\item[(iii)] if $f=1$, then there is at most one Type $2$ point and it must be defined over $\F_q$;
			\item[(iv)] if $f=2$, then there are at most six Type $2$ points, at most three if $p=2$ and at most four if $p=3$.
		\end{enumerate}
	\end{cor}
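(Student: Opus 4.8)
The plan is to deduce every clause directly from Baker's bound (Proposition~\ref{prop:bound_cartier_points}) together with the field-of-definition estimate (Corollary~\ref{cor:degree_of_definition_C_points}), specialized to the present situation. Since $X$ is non-hyperelliptic we have $\delta_X=1$, and the standing hypotheses of this subsection guarantee that $X$ is neither ordinary nor superspecial, so both cited results apply verbatim. With $g=4$ the quantity $2g-2$ equals $6$, so Baker's Type~$2$ bound reads
\[
b=\min\!\left(6,\ \frac{p^f-1}{p-1}\right),
\]
while Corollary~\ref{cor:degree_of_definition_C_points} says each Type~$2$ point is defined over $\F_{q^e}$ with $1\le e\le b$.

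Part~(i) is then immediate: $b\le 6$ bounds the number of Type~$2$ points, and $e\le b\le 6$ bounds their field of definition. For parts~(ii)--(iv) I would simply evaluate the geometric sum $\frac{p^f-1}{p-1}=1+p+\cdots+p^{f-1}$ for each relevant $f$ and take the minimum with $6$. For $f=0$ the sum is $0$, giving $b=0$ and hence no Type~$2$ points. For $f=1$ the sum is $1$, so $b=1$; the same degree estimate then forces $e\le 1$, so the single possible Type~$2$ point must already be rational over $\F_q$, which is the extra assertion of~(iii). For $f=2$ the sum equals $p+1$, whence $b=\min(6,p+1)$, which is $3$ when $p=2$, $4$ when $p=3$, and $6$ when $p\ge 5$, exactly matching~(iv).

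I expect no genuine obstacle: once the two cited results are in hand the argument is pure bookkeeping. The only places demanding attention are the small-characteristic cases $p=2,3$ in~(iv), where $p+1<6$ so the geometric-progression factor $\frac{p^f-1}{p-1}$ (not the degree bound $2g-2$) is the binding one, and the refinement in~(iii), where one must notice that the improved value $b=1$ collapses the field of definition down to $\F_q$. It is also worth recording, for completeness, that the omitted case $f=3$ gives $\frac{p^3-1}{p-1}=p^2+p+1\ge 7>6$, so its bound coincides with the generic bound of~(i); this is precisely why the corollary isolates only $f=0,1,2$.
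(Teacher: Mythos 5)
Your proposal is correct and takes essentially the same route as the paper: parts (i), (ii) and (iv) follow by evaluating Baker's minimum $\min\left(6, \frac{p^f-1}{p-1}\right)$ for the relevant values of $f$ and $p$, and the rationality claim in (iii) rests on the Frobenius-conjugation property of Cartier points. The only cosmetic difference is in (iii): you invoke Corollary~\ref{cor:degree_of_definition_C_points} (whose proof is exactly the Lemma~\ref{lem:frob_cartierpoint} argument) to force $e\le b=1$, whereas the paper reruns that argument directly, noting that if $\sigma(Q)\neq Q$ then $\sigma(Q)$ would be a second Type $2$ point, contradicting the bound of one.
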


	\begin{proof}
		Parts \textit{(i)}, \textit{(ii)} and \textit{(iv)} are direct consequences of the discussion following Proposition~\ref{prop:bound_cartier_points}. Indeed, the minimum of $2g-2=6$ and $\frac{p^f-1}{p-1}$ is 6, unless $f\leq 1$  or $p=2,3$ and $f= 2$.

		For \textit{(iii)} let $Q$ be a Type $2$ Cartier point of $X$ and suppose that $\sigma(Q)\neq Q$. By Lemma~\ref{lem:frob_cartierpoint} we have that $\sigma(Q)$ is another Cartier point of Type 2, which is a contradiction.  
	\end{proof}

	\begin{cor}[Type $1$ points and the $a$-number]\label{cor:type1_genus4}
		If $X$ is non-hyperelliptic of genus $4$ with $a$-number $1,2$ or $3$ defined over $\F_q$, then 
		\begin{enumerate}
			\item[(i)] $X$ has at most six Cartier points of Type $1$ and they are defined over $\F_{q^e}$ for some $e\leq 6$. Moreover,
			\item[(ii)]  if $a=1$, then there is at most one Type $1$ Cartier point
			\item[(iii)] if $a=3$, there are exactly six Type $1$ Cartier points on $X$, counting with multiplicity.
		\end{enumerate}
		
	\end{cor}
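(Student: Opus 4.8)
The plan is to derive all three parts by specializing the general machinery already assembled in this section to $g=4$, so that $2g-2=6$ throughout. For part (i), the bound of six Type $1$ points is exactly the $g=4$ instance of Proposition~\ref{prop:bound_cartier_points}(2), and the field-of-definition statement $e\le 6$ is precisely the Type $1$ case of Corollary~\ref{cor:degree_of_definition_C_points} applied with $q=p^r$. So (i) requires no new argument beyond invoking these two results.

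For part (ii), I would work with the description of Type $1$ points from Lemma~\ref{lem:Type_1Points_H1/pv=0}: a point $P$ is Type $1$ if and only if $H^{(1/p)}\bv_P=0$, so the Type $1$ points are exactly the points of $X(\kbar)$ lying in the projectivization of $\ker H^{(1/p)}$ inside $\PP^3$. Since the Frobenius twist $A\mapsto A^{(1/p)}$ acts bijectively on entries over the perfect field $\kbar$, it preserves rank, whence $\rank H^{(1/p)}=\rank H=g-a=4-1=3$. Therefore $\ker H^{(1/p)}$ is a one-dimensional subspace of $\HOOX\cong\kbar^4$, which projectivizes to a single point of $\PP^3$; at most one point of $X$ can coincide with it, giving at most one Type $1$ Cartier point.

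For part (iii), the point is that $a=3=g-1$, so $\rank H^{(1/p)}=\rank H=4-3=1$ and $\ker H^{(1/p)}$ is three-dimensional, i.e.\ its projectivization is a hyperplane $S\subset\PP^3$ --- exactly the situation set up in Section~\ref{sec:T1_a=g-1}. Because $X$ is non-hyperelliptic of genus $4$, its canonical model in $\PP^3$ is non-degenerate and has degree $2g-2=6$, so $X\nsubseteq S$ and the intersection $X\cap S$ is proper. The Type $1$ Cartier points are precisely $X\cap S$, and by Bézout their intersection multiplicities sum to $\deg X=6$; Proposition~\ref{prop:bound_cartier_points}(2) moreover guarantees that this intersection is nonempty. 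Hence there are exactly six Type $1$ points counted with multiplicity.

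The main obstacle, such as it is, sits in part (iii): one must justify that the canonical embedding is non-degenerate so that $X\not\subseteq S$, and that the canonically embedded curve has degree $2g-2=6$, in order to apply Bézout and conclude that the multiplicities sum to six rather than merely bounding the number of points. Both are standard facts about canonically embedded non-hyperelliptic curves, and the intersection-multiplicity framework is already in place from Section~\ref{sec:T1_a=g-1}; the remaining content is the routine rank bookkeeping used in (i) and (ii).
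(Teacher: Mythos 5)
Your proposal is correct and follows essentially the same route as the paper: part (i) by citing Proposition~\ref{prop:bound_cartier_points} and Corollary~\ref{cor:degree_of_definition_C_points} with $2g-2=6$, part (ii) by observing that $\ker H^{(1/p)}$ is one-dimensional and spans a single point of $\PP^3$, and part (iii) by intersecting $X$ with the hyperplane spanned by the kernel as set up in Section~\ref{sec:T1_a=g-1}. The only difference is that you spell out the non-degeneracy of the canonical embedding, the degree computation, and the B\'ezout count, which the paper leaves implicit.
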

	\begin{proof}
		Part \textit{(i)} follows from Proposition~\ref{prop:bound_cartier_points} and 
		Corollary~\ref{cor:degree_of_definition_C_points}, \linebreak using $2g-2=6$. 
		Now, if $a=1$, the kernel of $H^{(1/p)}$ has dimension $1$, because $a$ equals $\dim(\ker(H))$. Then $\ker(H)$ spans a point in $\PP^3$, and this is the only possible Type $1$ point. 
		When $a=3$, on the other hand, the subspace spanned by the same kernel is a hyperplane, so its intersection with $X$ consists on exactly as many points as the degree of the curve. 
	\end{proof}

	\subsubsection{Computing Cartier points}

	We explain here an algorithm to find the Cartier points on smooth, irreducible genus $4$ non-hyperelliptic curves over $\F_p$, given their quadratic and cubic defining homogeneous polynomials. Before we detail the procedure, we will revisit some facts about Cartier points. Let $X=V(F,G)$ be a curve as before.  
	\begin{enumerate}
		\item Let $H$ be the Hasse--Witt matrix of $X$ as in Proposition~\ref{prop:computation_hw}. Recall that $H$ represents the action of Frobenius on $H^1(X,\calO_X)$ with respect to the basis that corresponds to the coordinates $x,y,z,w$. This implies that we can use $H$ and Proposition~\ref{prop:find-c-pts-with-HW} to find the Cartier points of $X$, by solving $H\bv^{(p)}=c\bv$.
		
		\item If $P \in X(\F_{p^e})$ is a Cartier point, then there exists some  $\lambda \in \F_{p}$ such that $H^e\bv_P=\lambda\bv_P$ (Lemma~\ref{lem:T_points_are_eigenvectors}).
		\item If $P$ is a Type $1$ point, then $H\bv_P=0$ (Corollary~\ref{cor:find_type1_points_Hv=0_Fp}).
		\item The eigenvalues of $H^e$ are exactly the $e$-powers of the eigenvalues of $H$. If $\lambda $ is as in (2), then there exists an eigenvalue $\mu$ of $H$ such that $\lambda=\mu^e\in \F_{p}$.
		\item Let $h(x)$ be the characteristic polynomial of $H$. The splitting field of $h(x)$ is either $\F_p$, $\F_{p^2}$ or $\F_{p^3}$. Indeed, $h(x)$ has degree $4$, but since $H$ has rank at most $3$, then $x$ is a factor of $h(x)$. 
		
	\end{enumerate}
	
	We use these facts to compute the Type $1$ and Type $2$ Cartier points of $X$.  Algorithm~\ref{alg:Type2_points} is restricted to the case when $q=p$, to simplify the computations.
	
	\begin{algo}\label{alg:Type1_points}[Type $1$ Cartier points]
		\noindent
		Input: $F$ and $G$ in $\F_q[x,y,z,w]$.\\
		Output: List of Type $1$ Cartier points of $X=V(F,G)$.
		\begin{enumerate}
			\item Compute the Hasse--Witt matrix $H$ of $X$, as in Proposition~\ref{prop:computation_hw}.
			\item Let $M=H^{(1/p)}$. Construct the linear forms $L_i=(M)_{i,1}x+(M)_{i,2}y+(M)_{i,3}z+(M)_{i,4}w$, for $1\leq i\leq 4$.
			\item Let $I$ be the ideal generated by $\{L_1,L_2,L_3,L_4,F,G\}$ and let $T=V(I)$.
			\item For each  $1\le e \le 6$,  find the points in $T_e=T(\F_{q^e})$.
			\item The set of Type $1$ points is $\bigcup_{e}(T_e)$.
		\end{enumerate}
	\end{algo}

	\begin{algo}\label{alg:Type2_points}[Type $2$ Cartier points]
		
		\noindent
		Input: $F$ and $G$ in $\F_p[x,y,z,w]$.\\
		Output: List of Type $2$ Cartier points of $X=V(F,G)$.
		
		\begin{enumerate}
			\item Compute the Hasse--Witt matrix $H$ of $X$, as in Proposition~\ref{prop:computation_hw}.
			\item Compute $h(x)$, the characteristic polynomial of $H$ and find the roots of $h(x)$ in its splitting field. For each $\mu$ non-zero root of $h(x)$ and  each $1\le e \le 6$ such that $ \mu^e \in \F_{p}$:
			\begin{enumerate}
				\item Let $M=H^e-\mu^eI$, where $I$ is the identity matrix. 
				\item  For $1\leq i\leq 4$, construct the linear forms $L_i=(M)_{i,1}x+(M)_{i,2}y+(M)_{i,3}z+(M)_{i,4}w$.
				\item Let $I$ be the ideal generated by $L_1,L_2,L_3,L_4,F,G$ and let $T=V(I)$.
				\item For each point in $T(\F_{p^e})$, compute $H\bv_P^{(p)}$. If this gives a scalar multiple of $\bv$, then $P$ is a Type $2$ Cartier point.
				
			\end{enumerate}
		\end{enumerate}

	\end{algo}

	\subsection{Cartier points on standard curves over $\F_p$}\label{sec:Cartier_points_on_stn_FP}
	In this section we present the results from our search of curves in standard form, related to their $a$-number, $p$-rank and Cartier points, both of Type $1$ (T1) and Type $2$ (T2). 
	
	\textbf{The upper bound of Type $1$ points: }Our data reflects that, as expected, it is hard to find curves that attain the bounds on the number of Type $1$ Cartier points. For instance when the $a$-number is 1, a curve can have at most one Type $1$ point, but most curves have zero (see Table~\ref{tab:summary_T1}).  When $a=2$ the bound is six, but as stated in Corollary~\ref{cor:no_a2_curves_reach_UB_of_T1}, all of the curves in our sample have three or fewer Type $1$ Cartier points. 
	
	The bound on Type $1$ points is also six when $a=3$, but here we see a different behavior. This is mainly because in this case, the Type $1$ points come from intersecting the curve with a hyperplane, which guarantees exactly six points, counting with multiplicity. Even though our statistical data does not give a large sample of $a$-number $3$ curves over $\F_{7}$ and $\F_{11}$, we can see in Tables~\ref{tab:method1_curves} and~\ref{tab:summary_T1} that most curves attain the bound, except when $p=3$.

	\textbf{The upper bound on Type $2$ points:}
	Similarly, it seems unlikely for a curve with $p$-rank $f>0$ to reach the upper bound of Type $2$ points. This is true even when $f=1$ and thus the bound is one. 
	We can see in Table~\ref{tab:summary_T2} that the majority of curves with $p$-rank $1$ have no such points. The same happens when $f=2$. In this case the bound is six, but we did not find any curve with more than two Type $2$ points, except when $p=5$, where there exist two curves with six Type $2$ points.
	
	It is important to recall that the bound for Type $2$ points in characteristic $3$ is $4$, not $6$, and there are in fact $16$ $p$-rank $3$ curves that reach this bound. 	
	
	Here we present the overall results from obtained from our data. The details corresponding to each of $p \in \{3,5,7,11\}$ are given in the following subsections.
	
	\begin{cor}\label{cor:no_a2_curves_reach_UB_of_T1}
		In our sample of smooth curves, no curve with $a$-number $2$ reaches the bound of six Type $1$ Cartier points. Moreover, the maximum number of Type $1$ Cartier points attained for curves with $a$-number $2$ is three for $p \in\{5,7,11\}$ and two for $p=3$.
	\end{cor}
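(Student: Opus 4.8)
The statement is an empirical one about the smooth sample, so the plan is to reduce it to a finite computation and then read the answer off from the data. First I would recall why the theoretical ceiling is six: by Corollary~\ref{cor:type1_genus4}(i), any genus $4$ curve with $a$-number $2$ has at most $2g-2=6$ Type $1$ Cartier points. Geometrically, following the discussion after Proposition~\ref{prop:bound_cartier_points}, the Type $1$ points are exactly the points of $X(\kbar)$ lying on the linear subspace $S$ spanned by $\ker(H^{(1/p)})$. When $a=2$ we have $\rank(H)=2$, so $\dim\ker(H^{(1/p)})=2$ and $S$ is a line in $\PP^3$; since $X$ has degree $6$ and is not contained in $S$, the intersection $X\cap S$ is proper and has at most six points. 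Unlike the case $a=3$, however, a line need not meet a space curve in exactly $\deg X$ points, so the count genuinely varies from curve to curve and must be computed rather than predicted.

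To determine the maximum actually attained, I would apply Algorithm~\ref{alg:Type1_points} to every curve in the smooth sample with $a$-number $2$. For each such $X=V(F,G_{\avec})$ the algorithm forms the linear forms coming from $H^{(1/p)}$, adjoins $F$ and $G$, and enumerates the resulting zero-dimensional scheme over the extensions $\F_{q^e}$ for $1\le e\le 6$; by Corollary~\ref{cor:degree_of_definition_C_points} this range of $e$ is sufficient, since every Type $1$ point of such a curve is defined over some $\F_{q^e}$ with $e\le 2g-2=6$. Counting the distinct points produced gives the exact number of Type $1$ Cartier points of $X$. Running this over the $a=2$ portion of the smooth sample for each $p\in\{3,5,7,11\}$ and recording the largest count observed then yields the claim, as summarized in Table~\ref{tab:summary_T1}.

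The main point to be careful about is not a deep obstruction but a bookkeeping one. The enumeration must be carried out over all extensions up to degree six, not merely over $\F_p$, so that no Cartier point defined over a larger field is missed; and, because for $a=2$ the intersecting subspace $S$ is a line rather than a hyperplane, the counts must be taken \emph{without} multiplicity, in contrast with the $a=3$ situation described in Section~\ref{sec:T1_a=g-1}. Since the conclusion is asserted for the smooth sample and not for all genus $4$ curves, the computation establishes it directly once Algorithm~\ref{alg:Type1_points} has been verified to capture every Type $1$ point in the correct degree range; the observed maxima of three (for $p\in\{5,7,11\}$) and two (for $p=3$), together with the absence of any curve attaining six, are then read off from the tabulated output.
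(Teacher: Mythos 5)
Your proposal is correct and matches the paper's approach: the paper's proof is simply a citation of Table~\ref{tab:summary_T1}, whose entries were produced by exactly the computation you describe (running Algorithm~\ref{alg:Type1_points} over the $a=2$ curves in the smooth sample, checking extensions $\F_{q^e}$ for $1\le e\le 6$, and reading off the maximum count). Your added justification that the bound is six and that for $a=2$ the kernel of $H^{(1/p)}$ spans a line in $\PP^3$ is consistent with the paper's discussion following Proposition~\ref{prop:bound_cartier_points} and Corollary~\ref{cor:type1_genus4}.
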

	\begin{proof}
		See Table~\ref{tab:summary_T1}.
	\end{proof}

	\begin{cor}\label{cor:summary_T1_bounds}
		The bound on the number of Type 1 points is sharp for non-hyperelliptic smooth genus $4$ curves over $\F_p$ when
		\begin{itemize}
			\item $p\in \{3,5,7,11\}$ and $a=1$.
			\item $p\in \{5,7,11\}$ and $a=3$.
		\end{itemize} 
	\end{cor}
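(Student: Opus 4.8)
The plan is to prove sharpness by exhibiting, for each listed pair $(p,a)$, an explicit curve in standard form attaining the relevant upper bound on Type $1$ Cartier points, with the count certified by Algorithm~\ref{alg:Type1_points}; since every such curve is in particular a smooth non-hyperelliptic genus $4$ curve over $\F_p$, a single witness establishes sharpness of the stated bound. The two cases carry different geometric content, so I would treat them separately.

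For $a=1$ the bound is one (Corollary~\ref{cor:type1_genus4}(ii)). Here $\rank(H)=3$, so $H^{(1/p)}$ has a one-dimensional kernel spanning a single point $Q\in\PP^3$; by Lemma~\ref{lem:Type_1Points_H1/pv=0} this $Q$ is the only candidate, and it is a Type $1$ point exactly when $Q\in X$, that is when $F(Q)=G(Q)=0$. Sharpness for a given $p$ thus reduces to locating one curve in the smooth sample whose kernel point lies on the curve; for each $p\in\{3,5,7,11\}$ I would scan the sample for such a curve and display it, the existence being recorded in the $a=1$ rows of Table~\ref{tab:summary_T1}.

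For $a=3$ we have $\rank(H)=1$, so the kernel of $H^{(1/p)}$ is three-dimensional and its projectivization is a hyperplane $S\subset\PP^3$ cut out by the single nonzero row of $H^{(1/p)}$. By Lemma~\ref{lem:Type_1Points_H1/pv=0} the Type $1$ points are exactly the section $X\cap S$, a zero-cycle of degree $2g-2=6$ whenever $X\not\subset S$, giving the six points with multiplicity of Corollary~\ref{cor:type1_genus4}(iii). The bound is therefore sharp in the distinct-points sense precisely when $X\cap S$ is reduced. For each $p\in\{5,7,11\}$ I would pick from the classified $a=3$ curves (Tables~\ref{table:isom_classes_p5} and~\ref{table:isom_classes_p7}) one for which Algorithm~\ref{alg:Type1_points}, run over the fields $\F_{q^e}$ with $1\le e\le 6$, returns exactly six points, and exhibit it.

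The main obstacle is the $a=3$ case, where sharpness requires a reduced hyperplane section, not merely one of degree six. This reducedness can fail, and over $\F_3$ it fails for every standard-form curve with $a=3$; this is the content of Lemma~\ref{lem:T1_points_in_a3_p3_curves} and the reason $p=3$ is omitted from the second bullet. The corollary therefore admits no uniform argument and rests on the explicit search verifying that $p=5,7,11$ each possess an $a=3$ curve whose six intersection points are pairwise distinct, after which a single witness yields the claim.
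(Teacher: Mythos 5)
Your proposal is correct and follows essentially the same route as the paper: the paper's entire proof is ``See Table~\ref{tab:summary_T1}'', i.e., sharpness is certified by exhibiting witness curves from the computational database, which is exactly your strategy of producing one curve per pair $(p,a)$ whose Type~$1$ count (computed via Algorithm~\ref{alg:Type1_points}) meets the bound, together with the correct observation that for $a=3$ sharpness means a reduced hyperplane section and that Lemma~\ref{lem:T1_points_in_a3_p3_curves} is what excludes $p=3$. The only small inaccuracy is that for $p=11$ there is no classified table of $a=3$ curves; the witness there is the single $a=3$ curve found in the random sample (recorded in Table~\ref{tab:summary_T1} and displayed in Example~\ref{ex:CP_on_p11_curve}).
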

	
	\begin{proof}
		See Table~\ref{tab:summary_T1}.
	\end{proof}

	\begin{cor}\label{cor:no_f2_curves_reach_UB_of_T2}
		In our sample of smooth curves, no curve with $p$-rank $2$ or $3$ reaches the bound of six Type $2$ Cartier points when $p\in \{5,7,11\}$.

	\end{cor}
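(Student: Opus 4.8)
The plan is to reduce the assertion to a finite, exhaustive computation over the curves already recorded in the Smooth sample, underpinned by the theoretical bound of Corollary~\ref{cor:type2CP}. First I would pin down the value of the bound in the relevant range. By Proposition~\ref{prop:bound_cartier_points} (with $\delta_X=1$, since every curve in question is non-hyperelliptic) the number of Type $2$ points is at most $\min\left(6,\tfrac{p^f-1}{p-1}\right)$. For $f=2$ we have $\tfrac{p^2-1}{p-1}=p+1\ge 6$ precisely when $p\ge 5$, and for $f=3$ the quantity $\tfrac{p^3-1}{p-1}=p^2+p+1$ exceeds $6$ for every $p\ge 5$. Hence for $p\in\{5,7,11\}$ and $f\in\{2,3\}$ the relevant bound is exactly $6$, so ``reaching the bound'' means producing a curve with exactly six Type $2$ Cartier points.

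Next I would compute, for each curve $X=V(F,G_{\avec})$ in the Smooth sample whose $p$-rank is $2$ or $3$, the full set of its Type $2$ points by running Algorithm~\ref{alg:Type2_points}. For each such $X$ one forms the Hasse--Witt matrix $H$ via Proposition~\ref{prop:computation_hw}, factors its characteristic polynomial over the splitting field (which, as $H$ has rank at most $3$, is one of $\F_p$, $\F_{p^2}$, $\F_{p^3}$), and for every nonzero root $\mu$ and every exponent $1\le e\le 6$ with $\mu^e\in\F_p$ solves the system cut out by $L_1,\dots,L_4,F,G$ over $\F_{p^e}$, keeping only those $P$ for which $H\bv_P^{(p)}$ is a scalar multiple of $\bv_P$. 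By Corollary~\ref{cor:degree_of_definition_C_points} every Type $2$ point lies in $X(\F_{p^e})$ for some $e\le 6$, so the loop over $e\in\{1,\dots,6\}$ is certain to capture all of them; it is this completeness, rather than a sampling heuristic, that makes the per-curve count exact.

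Finally I would tabulate, for each pair $(p,f)$ with $p\in\{5,7,11\}$ and $f\in\{2,3\}$, the maximum number of Type $2$ points attained over the Smooth sample, collecting the results in Table~\ref{tab:summary_T2}; the statement then follows by inspecting that this maximum stays below $6$ in each asserted case. The main obstacle is computational rather than conceptual: Algorithm~\ref{alg:Type2_points} has to be executed for a very large number of curves, and for each curve the eigenvalue analysis together with the point search over the extensions $\F_{p^e}$ up to $e=6$ are the expensive steps, so the real difficulty is carrying the computation to completion across the entire Smooth sample while guaranteeing that no Type $2$ point defined over a higher extension is overlooked.
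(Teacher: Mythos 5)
Your proposal is correct and matches the paper's approach: the paper's proof is simply a citation of Table~\ref{tab:summary_T2}, which records exactly the computation you describe — running Algorithm~\ref{alg:Type2_points} over every curve in the Smooth sample with $p$-rank $2$ or $3$, with completeness guaranteed by Corollary~\ref{cor:degree_of_definition_C_points}, and observing that the maximum count of Type $2$ points stays below six for $p\in\{5,7,11\}$. Your preliminary verification that the bound equals $6$ in this range is a useful explicit check that the paper leaves implicit, but it is not a different argument.
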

	\begin{proof}
		See Table~\ref{tab:summary_T2}.
	\end{proof}

	\begin{table}[h]  
		\caption{Summary of Type $1$ points on samples of standard form curves.}\label{tab:summary_T1}
		\begin{center}
			\begin{tabular}{cccccc}
				$a$-number	&	\#T1 	&	3	&	5	&	7	&	11	\\	\hline\hline
				1	&	0	&	23971	&	88896	&	86563	&	25205	\\	
				1	&	1	&	5327	&	3896	&	1918	&	202	\\	
				\multicolumn{2}{c}{Total}		&	29298	&	92792	&	88481	&	25407	\\	
				\multicolumn{2}{c}{\% that attains UB}		&	18.18	\%&	4.20	\%&	2.17	\%&	0.80	\%\\	\hline
				2	&	0	&	1690	&	3158	&	1595	&	191	\\	
				2	&	1	&	3268	&	832	&	262	&	25	\\	
				2	&	2	&	1673	&	149	&	37	&	1	\\	
				2	&	3	&	0	&	65	&	5	&	1	\\	
				
				\multicolumn{2}{c}{Total}		&	6631	&	4204	&	1899	&	218	\\	
				\multicolumn{2}{c}{\% that attains UB}		&	0	\%&	0	\%&	0	\%&	0	\%\\	\hline
				3	&	0	&	0	&	0	&	0	&	0	\\	
				3	&	1	&	660	&	4	&	0	&	0	\\	
				3	&	2	&	40	&	4	&	0	&	0	\\	
				3	&	3	&	0	&	14	&	0	&	0	\\	
				3	&	4	&	0	&	0	&	1	&	0	\\	
				3	&	5	&	0	&	0	&	0	&	0	\\	
				3	&	6	&	0	&	24	&	4	&	1	\\	
				\multicolumn{2}{c}{Total}		&	700	&	46	&	5	&	1	\\	
				\multicolumn{2}{c}{\% that attains UB}		&	0	\%&	52.17	\%&	80.00	\%&	100	\%\\	\hline
				&	Sample size	&	186266	&	719102	&	863038	&	361098	\\	\hline
			\end{tabular}
		\end{center}
	\end{table}

	\begin{table}[h]  
		\caption{Summary of Type $2$ points on samples of standard form curves. }\label{tab:summary_T2}
		\begin{center}
			\begin{tabular}{cccccc}
				$f$	&	\#T2	&	3	&	5	&	7	&	11	\\	\hline\hline
				0	&	0	&	1157	&	193	&	44	&	3	\\	\hline
				1	&	0	&	4276	&	4272	&	1853	&	239	\\	
				1	&	1	&	198	&	146	&	35	&	0	\\	
				\multicolumn{2}{c}{Total}		&	4474	&	4418	&	1888	&	239	\\	
				
				\hline
				2	&	0	&	7137	&	17072	&	12071	&	2320	\\	
				2	&	1	&	353	&	737	&	235	&	20	\\	
				2	&	2	&	23	&	35	&	5	&	0	\\	
				2	&	3	&	0	&	2	&	0	&	0	\\	
				
				\multicolumn{2}{c}{Total}		&	7513	&	17846	&	12311	&	2340	\\	
				
				\hline
				3	&	0	&	21951	&	71423	&	74504	&	22837	\\	
				3	&	1	&	1394	&	3032	&	1597	&	204	\\	
				3	&	2	&	117	&	113	&	41	&	3	\\	
				3	&	3	&	14	&	15	&	0	&	0	\\	
				3	&	4	&	9	&	2	&	0	&	0	\\	
				
				\multicolumn{2}{c}{Total}		&	23485	&	74585	&	76142	&	23044	\\	
				
				\multicolumn{2}{c}{Sample size}			&	186266	&	719102	&	863038	&	361098	\\	\hline
			\end{tabular}
		\end{center}
	\end{table}

	We know by Corollary~\ref{cor:type1_genus4}  that the upper bound on the number of Type $1$ Cartier points is given by $2g-2=6$ if $a=2,3$ and $1$ if $a=1$. We want to determine for which $a,f$ and $p$  these bounds are attained. For example, this does not happen for any of the curves with $a=2$ that we found.  In Table~\ref{tab:summary_T1}, we break down the number of curves over $\F_p$ by the number of Type $1$ points they have.

	On the other hand, the upper bound on the number of Type $2$ points depends on the $p$-rank: it is $\min \{2g-2, \frac{p^f-1}{p-1}\}$. As stated in Corollary~\ref{cor:no_a2_curves_reach_UB_of_T1}, and observed in Table~\ref{tab:summary_T2}, it is unlikely that a curve of $p$-rank $2$ or $3$ reaches the bound of six Type $2$ points.

	\subsubsection{Case $p=3$}\label{sec:CP_on_GC_p=3}
	
	From the sample of $186266$ curves over $\F_3$ a total of $35472$ have $a$-number $1,2$ or $3$.  There are only two instances in which we identified curves that realize the (non-zero) upper bound on Type $1$ and Type $2$ points. These are when $a=1$, for Type $1$ points, and when $f=3$ for Type $2$ points, and the bounds are $1$ and $4$, respectively. Even so, we can observe in Tables~\ref{tab:summary_T1} and ~\ref{tab:summary_T2} that most curves tend to have fewer Cartier points.

	\subsubsection{Case $p=5$}
	Recall that we sampled a total of $719102$ tuples over $\F_5$ and obtained $97042$ curves of $a$-number $1\leq a \leq 3$. 
	Only $3896$ out of the $88896$ curves with $a=1$ have  a Type $1$ Cartier point.  This is expected because for each curve, there is only one point $P$ such that $H\bv_P=0$, so it is unlikely for this point to also be on the curve.

	\subsubsection{Case $p=7$}\label{subsub:CPsF7}
	We sampled a total of $863200$ random tuples, obtaining $619872$ curves in standard form and $90403$ of them with $a$-numbers $1,2$ or $3$. We note that the upper bounds for the number of Type $1$ points are realized when $a=1$ and $a=3$, but not for $a=2$, where the maximum number of points attained is three. With respect to the Type $2$ points, the (non-zero) upper bounds are only attained when the $p$-rank is $1$. In particular, we get the following result:
	
	\begin{cor}\label{cor:bounds_reached_p7}
		Baker's bound on the total number of Cartier points for genus $4$ curves with $a=3$ and $p$-rank $1$ is attained over $\F_7$.
	\end{cor}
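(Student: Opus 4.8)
The plan is to exhibit an explicit genus $4$ curve over $\F_7$ in standard form with $a$-number $3$ and $p$-rank $1$ that simultaneously attains both Baker bounds, and then simply verify the claim by direct computation using the algorithms already developed. By Corollary~\ref{cor:type1_genus4}(iii), any such curve automatically has exactly six Type $1$ Cartier points counted with multiplicity; and by Corollary~\ref{cor:type2CP}(iii), a $p$-rank $1$ curve has at most one Type $2$ point. So Baker's total bound for this $(a,f)=(3,1)$ case is $2g-2=6$ Type $1$ points plus $\frac{p^f-1}{p-1}=1$ Type $2$ point, for a maximum of seven Cartier points total. The statement asserts this maximum of seven is realized by some curve over $\F_7$.

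First I would locate a candidate among the $65$ isomorphism classes in case D (or the $29$ in case N1i$'$) with $a=3$ and $p$-rank $1$ that were found in the exhaustive search of Section~\ref{sec:summary_exhaustive}; from Table~\ref{tab:summary_T1} the column for $p=7$ shows exactly four curves in the sample with the full six Type $1$ points, so such curves exist, and from Table~\ref{tab:summary_T2} there are curves of $p$-rank $1$ with a Type $2$ point. The key step is to produce one curve having both simultaneously. I would run Algorithm~\ref{alg:Type1_points} to confirm six distinct (or six-with-multiplicity) Type $1$ points by computing $H^{(1/p)}$, forming the linear system cutting out $\ker H^{(1/p)}$, and intersecting with $V(F,G)$ over the extensions $\F_{7^e}$ for $1\le e\le 6$; then run Algorithm~\ref{alg:Type2_points} to find the characteristic polynomial $h(x)$ of $H$, extract a nonzero eigenvalue $\mu$, solve the associated linear system $H^e-\mu^e I$, and verify via $H\bv_P^{(p)}=c\bv_P$ that the resulting point is a genuine Type $2$ Cartier point.

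The main obstacle is not conceptual but computational and arithmetic: guaranteeing that all six Type $1$ intersection points are actually realized as points over $\overline{\F}_7$ of bounded degree (rather than degenerating), and that the single Type $2$ eigendirection meets the curve in a point defined over $\F_7$ (which Corollary~\ref{cor:type2CP}(iii) forces, since a $p$-rank $1$ Type $2$ point must be $\F_q$-rational). I would need to check that the hyperplane $S$ spanned by $\ker H^{(1/p)}$ does not contain $X$, so that the intersection $X\cap S$ is proper and the multiplicity count $\sum m_i = 2g-2=6$ from Section~\ref{sec:T1_a=g-1} applies, and that the Type $1$ and Type $2$ points are genuinely distinct so that the counts add. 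Once a single explicit curve passes all these checks, the sharpness claim follows immediately, and the proof reduces to citing the output of the algorithms applied to that curve, as recorded in Tables~\ref{tab:summary_T1} and~\ref{tab:summary_T2}.
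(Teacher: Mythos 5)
Your proposal is correct and follows essentially the same route as the paper: the paper proves this corollary by exhibiting a single explicit curve from its computational search (Example~\ref{ex:CPbound_reached_p7_2}, an N1 curve over $\F_7$ with Hasse--Witt matrix of rank $1$), verifying by direct computation that it has the Type $2$ point $[1:0:0:0]$ together with six distinct Type $1$ points (two over $\F_7$, four over $\F_{49}$), for a total of seven Cartier points. Your outline of locating such a curve among the $a=3$, $f=1$ classes and certifying it with Algorithms~\ref{alg:Type1_points} and~\ref{alg:Type2_points} is exactly this argument, including the correct identification of Baker's total bound as $2g-2+\frac{p^f-1}{p-1}=7$.
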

	
	In our data there is only one curve with $a$-number $3$ where both the bounds of Type $1$ and Type $2$ points are attained, we show it in Example~\ref{ex:CPbound_reached_p7_2}. We also include Example~\ref{ex:p7_6T1_points_2}, in which we see a curve with only $4$ Type $1$ points. The Hasse--Witt matrix in each example is computed with the basis given in Proposition~\ref{prop:computation_hw}.

	\subsubsection{Case $p=11$}\label{subsub:CPsF11}
	
	The size of the random sample in this case is of $361098$ tuples. Note that, once again, the upper bound for the Type $1$ points is attained for some cases where $a=1,3$. On the other hand, none of the curves realize the bound of Type $2$ points (except, of course when $f=0$). One important observation is that, from all sampled tuples, only one of them resulted in a curve with $a$-number $3$. This curve also achieves the maximum of $6$ Type $1$ points.

	\section{Cartier points on curves over $\F_p$ with $a$-number $3$}
	
	In Table~\ref{tab:method1_curves} we display the information of curves in standard form over $\F_p$ for $p \in \{3,5,7\}$, obtained from the exhaustive search. For $p=3,5$ the search was done over all the possible tuples. For $p=7$ we found all the curves in standard form in the degenerate case and all of those in the case N1i when the coefficients of $y^2z$ and $z^2w$ are 0. We denote these lists by 7(D) and 7(N1i'), respectively.
	In the second column, we list the total number of isomorphism classes of curves with $a$-number $3$ found for each $p$. The third column indicates the total number of curves that attain the maximum of six Type $1$ Cartier points. The last two columns correspond to the number of curves that reach the maximum of Type $2$ points, over the total with the respective $p$-rank. Notice that all curves with $f=0$ trivially reach this bound, since the bound is $0$.

	\begin{table}[h]  
		\caption{Isomorphism classes in standard form with $a$-number $3$ over $\F_p$. }\label{tab:method1_curves}
		\begin{center}
			\begin{tabular}{c|c|c|cc}
				
				\multirow{3}{*}{$p$	}&Classes & { Attain max. of T1} &\multicolumn{2}{c}{Attain max. of T2}  \\ 
				&$a=3$	&$a=3$&$f=0$&$f=1$\\	
				&		&	$(6)$	&	$(0)$&$(1)$	\\
				\hline
				\hline	
				{3}			&27&0&0/0&0/27\\	
				
				{5}		&134&80&36/36&5/98\\
				
				{7(D)}	&65&48&9/9&0/56\\	
				
				{7(N1i')}	&29&23&2/2&1/27\\	
				
				\hline
			\end{tabular} 
		\end{center}
	\end{table}

	In the case $p=3$ all curves have either one or two Type $1$ points maximum; for $p=5$, the curves show one, two three or six points, and for $p=7$ we saw evidence of curves with any number of Type $1$ points ranging from one to six. In addition, over $\F_5$, every possible degree distribution for curves with six (distinct) Type $1$ points occurs. Also, all except one of them occur among our sample over $\F_7$.

	\subsection{Case $p=3$}

	We will now discuss the curves in standard form over $\F_3$ of $a$-number $3$. Since we have a complete list of all of the isomorphism classes of these curves, we know that all of them have $p$-rank $1$.

	In Lemma~\ref{lem:standard_form_a3_p3_curves} we specify representatives for the isomorphism classes of curves in standard form over $\F_3$ with $a$-number $3$. It turns out the all the curves from the same kind have the equal multiplicity and degree distribution of Type $1$ points, as a consequence we get the next lemma. We show explicitly the points in Tables ~\ref{table:repres_p3_D},~\ref{table:repres_p3_N1} and~\ref{table:repres_p3_N2}.

	\begin{lem}\label{lem:T1_points_in_a3_p3_curves}
		A curve $X=V(F,G) $ in standard form with $a$-number $3$ over $\F_3$ has no Type $2$ Cartier points and
		\begin{itemize}
			\item if $F=F_d=2yw+z^2$, then $X$ has exactly one Type $1$ point of multiplicity 6;
			\item if $F=F_1=2xw+2yz$, then $X$ has exactly two Type $1$ points of multiplicity 3, each defined over $\F_3$;
			\item if $F=F_2=2xw+y^2+z^2$, then $X$ has exactly two Type $1$ points of multiplicity 3, each defined over $\F_9.$
		\end{itemize}
	\end{lem}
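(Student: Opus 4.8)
The plan is to reduce the statement to a finite verification over the explicit representatives furnished by Lemma~\ref{lem:standard_form_a3_p3_curves}, since the number of Type~$1$ and Type~$2$ Cartier points, together with their multiplicities and fields of definition, is an isomorphism invariant. By Corollary~\ref{cor:exhaustive_search_p3} there are exactly $27$ $\F_3$-isomorphism classes, grouped by the quadratic form into the three cases $F_d$, $F_1$ and $F_2$, so it suffices to run the computation on one representative of each class and to record that the resulting multiplicity-and-degree distribution is constant within each case. For each representative I would first compute the Hasse--Witt matrix $H$ via Proposition~\ref{prop:computation_hw}; since $a=3$ we have $\rank(H)=g-a=1$.

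For the Type~$1$ points I would use the geometric description from Section~\ref{sec:T1_a=g-1}. Because $\rank(H)=1$, and because we work over $\F_3$ so that $H^{(1/p)}=H$ and Type~$1$ points satisfy $H\bv_P=0$ by Corollary~\ref{cor:find_type1_points_Hv=0_Fp}, the kernel $\ker(H)$ is three dimensional and its projectivization is a hyperplane $S\subset\PP^3$; the Type~$1$ Cartier points are exactly the points of $X\cap S$, which by Corollary~\ref{cor:type1_genus4}(iii) total $2g-2=6$ counted with multiplicity. Concretely, reading off the linear form $\ell$ spanning the row space of $H$, which defines $S=\{\ell=0\}$, I would intersect $S$ with $X=V(F,G)$ and compute the local intersection multiplicities. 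The expectation, to be confirmed case by case, is that for $F=F_d$ the section $X\cap S$ is supported at a single point of multiplicity $6$ (hence automatically Galois-fixed and defined over $\F_3$); that for $F=F_1$ it splits as two distinct $\F_3$-rational points each of multiplicity $3$; and that for $F=F_2$ it consists of two points of multiplicity $3$ conjugate over $\F_3$, which I expect to be forced into $\F_9$ because the relevant points lie on the anisotropic conic $y^2+z^2=0$, whose zeros require $\sqrt{-1}\in\F_9$.

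For the Type~$2$ points I would argue that there are none as follows. By Corollary~\ref{cor:no_prank0_p3_curves} no such curve has $p$-rank $0$, and since $a+f\le g=4$ with $a=3$ forces $f\le 1$, every curve in standard form over $\F_3$ with $a=3$ has $p$-rank $f=1$; hence Corollary~\ref{cor:type2CP}(iii) already restricts $X$ to at most one Type~$2$ point, necessarily defined over $\F_3$. To eliminate it, note that a rank-$1$ matrix $H$ has characteristic polynomial $x^{3}\bigl(x-\mathrm{tr}(H)\bigr)$, so its only possible nonzero eigenvalue is $\mu=\mathrm{tr}(H)$, and $f=1$ forces $\mu\neq 0$. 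A Type~$2$ point defined over $\F_3$ would be the projectivization of an eigenvector $\bw$ of $H$ for $\mu$ lying on $X$; since the $\mu$-eigenspace is one dimensional, there is a unique candidate point $[\bw]\in\PP^3$, and it suffices to substitute $\bw$ into $F$ and $G$ and check that $[\bw]\notin X$. Carrying this out for each representative, equivalently running Algorithm~\ref{alg:Type2_points} and obtaining an empty list, yields the claim. The explicit points and multiplicities are recorded in Tables~\ref{table:repres_p3_D}, \ref{table:repres_p3_N1} and~\ref{table:repres_p3_N2}.

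The main obstacle is not conceptual but computational: the content of the lemma lies in the intersection-multiplicity calculations for $X\cap S$ and in verifying that the multiplicity-and-degree distribution is genuinely uniform across all representatives within each case, rather than merely holding for one chosen model. In particular, the contrast between the $\F_3$-rational splitting in case $F_1$ and the $\F_9$-conjugate pair in case $F_2$ is not forced by the multiplicities alone, since a pair of points of equal multiplicity may be either Galois-fixed or swapped; it must be traced carefully to the arithmetic of the underlying quadric, which is where I expect the verification to be most delicate.
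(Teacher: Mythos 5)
Your proposal is correct and follows essentially the same route as the paper: the paper's proof is precisely a finite verification on the $27$ representatives of Lemma~\ref{lem:standard_form_a3_p3_curves}, recording the computed Type~$1$ points in Tables~\ref{table:repres_p3_D}, \ref{table:repres_p3_N1} and~\ref{table:repres_p3_N2}, with Example~\ref{ex:T1_T2_on_D_p3_a3} carrying out exactly your computation (rank-one Hasse--Witt matrix, eigenvector check against $X$ for Type~$2$, hyperplane intersection with forced multiplicity for Type~$1$) in the case D. Your additional remarks (isomorphism invariance of the point counts, $f=1$ via $a+f\le g$ and Corollary~\ref{cor:no_prank0_p3_curves}, and the trace being the only possible nonzero eigenvalue of a rank-one matrix) are correct and only make explicit what the paper leaves implicit.
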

	
	\begin{proof}
		See Tables~\ref{table:repres_p3_D},~\ref{table:repres_p3_N1} and~\ref{table:repres_p3_N2}.
	\end{proof}
	
	The proof of Lemma~\ref{lem:T1_points_in_a3_p3_curves} follows from our classification of curves in standard form into isomorphism classes and direct computation of Cartier points. Here we give an overview of the heuristics that go into this procedure, in the degenerate case. The other two cases can be worked out in a similar way.

	\begin{example}[Cartier points in the case D]\label{ex:T1_T2_on_D_p3_a3}
		Let $X=V(F,G)$ be a curve from Table~\ref{table:repres_p3_D}.  
		We know that  $F=2yw+z^2$ and that  $G$ is of the form
		$$ G=x^3+y^3+xyz+c_1yz^2+xw^2+c_2w^3,$$
		where $c_1 \in \F_3^\times$ and $c_2 \in \F_3$. We will show that $X$ has no Type $2$ Cartier points and that $[2-c_1-c_2:1:2:1]$ is the  unique Type $1$ Cartier point.

		By Proposition~\ref{prop:computation_hw}, the Hasse-Witt matrix of $X$ is
		\begin{equation}
		H= \begin{bmatrix}
		0& 0& 0 &0\\
		0&1 & 2 & 1\\
		0& 0 & 0 & 0\\
		0&0 & 0&0 \\	 
		\end{bmatrix}.
		\end{equation}
		We can see right away that $X$ has no Type $2$ Cartier points, since, up to scalar multiplication, the only vector such that $H\bv=c\bv$ for some $c\in \F_3$ is \linebreak $\bv=(0,1,0,0)^T$, but these are not the coordinates of a point on $X$. 
		
		Now, the hyperplane $S$ generated by the kernel of $H$ is the zero locus of $L= y+2z+w$. Then by substituting $y=z-w$ in $F=0$ we get $z^2+2zw+w^2=0$. If $w=0$, then $z=y=0$, but $[1:0:0:0] \not\in X$, so assume $w=1$. Then $z=2$ and $y=1$ and the Type $1$ points of $X$ are of the form $P=[\alpha:1:2:1]$. To find $\alpha $ we evaluate $G$ at $P$ and get $\alpha =2-c_1-c_2$.  So there is a unique Type $1$ point, which must have multiplicity $6$. 

	\end{example}

	In Tables~\ref{table:repres_p3_D},~\ref{table:repres_p3_N1} and~\ref{table:repres_p3_N2} we show the cubic polynomial for a representative of each isomorphism class in the cases D, N1 and N2, respectively. The curves are of the form $X=V(F,G)$. We also specify the Type $1$ Cartier points.

	\begin{table}[h]  
		\caption{Isomorphism classes of D curves with $a=3$ over $\F_3$.}
		\label{table:repres_p3_D}

		\begin{center}
			\begin{tabular}{cc}
				
				Representative $G$ & Type 1 point\\
				\hline 
				\hline
				$x^3 + y^3 + xyz + yz^2 + xw^2$	& $[ 1 : 1 : 2 : 1] $\\
				
				$x^3 + y^3 + xyz - yz^2 + xw^2$	&  $ [0 : 1 : 2 : 1]$ \\
				
				$x^3 + y^3 + xyz + yz^2 + xw^2 + w^3$	& $[0 : 1 : 2 : 1]$\\
				
				$x^3 + y^3 + xyz - yz^2 + xw^2 + w^3$	&   $[2 : 1 : 2 : 1]$\\
				
				$x^3 + y^3 + xyz + yz^2 + xw^2 - w^3$	& $[2 : 1 : 2 : 1]$\\ 
				
				$x^3 + y^3 + xyz - yz^2 + xw^2 - w^3$	& $[1 : 1 : 2 : 1 ]$\\  
				\hline 
			\end{tabular} 
		\end{center}
		
	\end{table}

	\begin{table}[h]  
		\caption{Isomorphism classes of N1 curves with $a=3$ over $\F_3$.}
		\label{table:repres_p3_N1}
		
		\begin{center}
			\begin{tabular}{cl}
				
				Representative $G$ & Type 1 points \\
				\hline 
				\hline
				$x^2y + y^3 + x^2z + y^2w + z^2w - w^3$	 &$[	1 : 2 : 1 : 1],[ 0 : 0 : 2 : 1 ]$\\
				$x^2y + y^3 + x^2z - y^2w + z^2w - w^3$	 &$[	0 : 2 : 0 : 1], [0 : 0 : 1 : 1 ]$\\
				$x^2y + y^3 + x^2z + z^3 + y^2w + z^2w + w^3$	& $[ 1 : 2 : 1 : 1],[ 1 : 1 : 2 : 1 ]$\\
				$x^2y + y^3 + x^2z + z^3 - y^2w + z^2w + w^3$	& $[0 : 0 : 1 : 1], [2 : 2 : 2 : 1] $\\
				$x^2y + y^3 + x^2z - z^3 + y^2w + z^2w + w^3$	& $[0 : 0 : 2 : 1],[ 1 : 0 : 1 : 0] $\\
				$x^2y + y^3 + x^2z - z^3 + y^2w + z^2w - w^3$	&  $[1 : 1 : 2 : 1],[ 1 : 0 : 1 : 0] $\\
				$x^2y + y^3 + x^2z - z^3 - y^2w + z^2w + w^3$	&  $[2 : 1 : 1 : 1], [1 : 0 : 1 : 0] $\\
				$x^2y + x^2z + y^2w + z^2w + w^3$&$[1 : 2 : 1 : 1],[ 1 : 1 : 2 : 1] $\\
				$x^2y + x^2z + y^2w + z^2w - w^3$&$[ 0 : 2 : 0 : 1], [0 : 0 : 2 : 1 ]$\\
				$x^2y + x^2z - y^2w + z^2w + w^3$& $ [0 : 2 : 0 : 1], [2 : 1 : 1 : 1] $\\
				
				\hline 
			\end{tabular} 
		\end{center}
		
	\end{table}

	\begin{table}[h]  
		\caption{Isomorphism classes of N2 curves with $a=3$ over $\F_3$, with $\beta^2 + 2\beta+ 2=0$.}
		\label{table:repres_p3_N2}

		\begin{center}
			\begin{tabular}{cl}
				
				Representative $G$ & Type 1 points \\ 
				\hline 
				\hline
				$ x^2y + y^3 + x^2z - z^3 - yzw - w^3$&$[\beta^5 : 1 : \beta^3 : 1], [\beta^7 : 1 : \beta : 1]$ \\
				
				$x^2y + x^2z - yzw - w^3$&$ [0 : \beta^5 : \beta^7 : 1], [0 : \beta^7 : \beta^5 : 1] $\\
				
				$x^2y + y^3 + x^2z - z^3 - yzw + w^3$&$ [\beta^2 : 0 : \beta : 1], [\beta^6 : 0 : \beta^3 : 1] $\\
				
				$x^2y + x^2z - z^3 - yzw + w^3$&$ [\beta^2 : 0 : \beta : 1], [\beta^6 : 0 : \beta^3 : 1] $\\
				$x^2y +y^3 + x^2z + z^3 - yzw + w^3$&$ [1 : \beta^2 : \beta^6 : 1], [1 : \beta^6 : \beta^2 : 1] $\\
				$x^2y - y^3 + x^2z - z^3 - yzw - w^3$&$ [\beta^7 : \beta^2 : 1 : 0], [\beta^5 : \beta^6 : 1 : 0] $\\
				$x^2y + x^2z + z^3 - y^2w - yzw + z^2w + w^3$&$ [2 : 0 : \beta^2 : 1], [2 : 0 : \beta^6 : 1] $\\
				$x^2y - y^3 + x^2z - y^2w - yzw + z^2w - w^3$&$ [\beta^7 : \beta^5 : 1 : 1], [\beta^5 : \beta^7 : 1 : 1] $\\
				$x^2y - x^2z - y^2w + yzw + z^2w + w^3$&$ [\beta^2 : \beta^3 : \beta^7 : 1], [\beta^6 : \beta : \beta^5 : 1] $\\
				$x^2y + x^2z - z^3 - y^2w - yzw + z^2w + w^3$&$ [\beta^2 : \beta^3 : \beta^7 : 1], [\beta^6 : \beta : \beta^5 : 1] $\\
				$x^2y - x^2z - z^3 - y^2w + yzw + z^2w - w^3$&$ [0 : 2 : \beta^2 : 1], [0 : 2 : \beta^6 : 1] $\\
				
				\hline 
			\end{tabular} 
		\end{center}
		
	\end{table}

	\subsection{Case $p=5$}\label{sec:exhaustive_p5}

	We  found the set of Cartier points on all curves in standard form over $\F_5$ with $a$-number $3$. There are some behaviors that are different from the case $p=3$. For instance, there are isomorphism classes of curves that attain the bound on the number of Cartier points, both with $f=1$ and $f=0$. It also happens that some of them have Type $2$ Cartier points (see Table~\ref{table:p=5GoodCurves_number_of_CPs} and Proposition~\ref{prop:bound_onCPs_p5}).  
	Another feature is that every degree distribution occurs when there are six distinct Type 1 points.

	\begin{table}[h]  
		\caption{Number of Type $1$ and $2$ Cartier points over $\F_5$.}
		\label{table:p=5GoodCurves_number_of_CPs}
		\begin{center}
			\begin{tabular}{c|cccccc|c}
				
				Case	& \multicolumn{6}{c|}{Curves with $n$ Type $1$ points} & Curves with Type $2$  \\ 
				
				& $1$ &2  &3  &4  & 5 & 6 &1  \\ 
				\hline 	\hline 
				D	& 14 & 0 &26  & 0 &0  &  19& 0 \\ 
				
				N1i	& 3 & 7 & 0 & 0 &0  &50  & 4 \\ 
				
				N1ii	& 0 &  1& 0 & 0 &0  &5  &1  \\ 
				
				N2	&2  & $1$ &0  & 0 &  0&6 &0  \\ 
				\hline 
				Total& 19&9&26&0&0&80&5\\ \hline
			\end{tabular} 
		\end{center}
	\end{table}

	\begin{example}\label{ex:T2CPs_p5}
		There are only $5$ isomorphism classes of curves in standard form with $a$-number $3$ over $\F_5$ that have a Type $2$ Cartier point. Here we show the cubic polynomial of each curve, together with the Type $2$ point. All of these curves have only one Type $1$ Cartier point, which implies there are no $p$-rank $1$ curves over $\F_5$ that reach the bound of seven total Cartier points.
		\begin{itemize}
			\item $x^2y + y^3 + x^2z + 2xyz + 2yz^2 + z^3 + 2y^2w + 2yzw + yw^2 + w^3$, $[1:0:3:0]$.
			\item $2x^2y + y^3 + x^2z + xyz - yz^2 - z^3 + 2yzw - 2yw^2 + zw^2 - w^3 $,  $[0:0:1:2]$.
			\item 	$2x^2y + y^3 + 2x^2z + xyz + y^2z - 2yz^2 - 2z^3 + 2y^2w - yzw + yw^2 - 2zw^2 - 2w^3 $, $[1:3:2:4]$. 
			\item 	$ x^2y + y^3 - 2x^2z + y^2z - yz^2 - y^2w - yzw + z^2w - yw^2 - 2zw^2 - 2w^3$, $[1:2:0:0]$.
			\item	$x^2y + 2x^2z - 2xyz + y^2z + yz^2 - 2y^2w + 2yzw + z^2w + yw^2 - 2zw^2 - 2w^3$,  $[1:4:1:1]$. 
		\end{itemize}
	\end{example}

	\begin{prop}\label{prop:bound_onCPs_p5}
		There are no curves in standard form over $\F_5$ with $p$-rank $1$ and $a$-number $3$ that reach Baker's bound of seven Cartier points.
		
		There are, up to $\F_5$-isomorphism, $17$ curves in standard form with $p$-rank $0$ and $a$-number $1$ that reach the bound of six Type $1$ Cartier points.
		
	\end{prop}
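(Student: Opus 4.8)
The plan is to derive both assertions from the complete classification of genus~$4$ curves in standard form over $\F_5$ with $a$-number~$3$ established in Corollary~\ref{cor:exhaustive_search_p5}, together with the explicit determination of Cartier points via Algorithms~\ref{alg:Type1_points} and~\ref{alg:Type2_points}. Since we already possess all $134$ isomorphism classes and, for each, its Hasse--Witt matrix, the proposition reduces to reading off two finite counts; the only genuine content is organizing those computations correctly and interpreting the bound.

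For the first assertion, I would first pin down what ``Baker's bound of seven'' means here. By Corollary~\ref{cor:type1_genus4}(iii) a curve with $a=3$ has exactly six Type~$1$ points counted with multiplicity, hence at most six distinct ones, while by Corollary~\ref{cor:type2CP}(iii) a curve with $p$-rank~$1$ has at most one Type~$2$ point. Every Cartier point is of exactly one type, since the scalar $c$ in $H\bv_P^{(p)}=c\bv_P$ is uniquely determined by $P$ (as $\bv_P\neq 0$); thus a Type~$1$ and a Type~$2$ point are automatically distinct and the total is at most $6+1=7$, with equality forcing six \emph{distinct} Type~$1$ points plus one Type~$2$ point. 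The decisive input is then Example~\ref{ex:T2CPs_p5}: the only five $\F_5$-isomorphism classes carrying a Type~$2$ point each have a single Type~$1$ Cartier point. Consequently no $p$-rank~$1$ curve simultaneously has six distinct Type~$1$ points and a Type~$2$ point, so none attains seven.

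For the second assertion, I would restrict to the $36$ $p$-rank~$0$ classes recorded in Table~\ref{table:isom_classes_p5}; for these Corollary~\ref{cor:type2CP}(ii) rules out Type~$2$ points, so the total bound is $6$ and coincides with the Type~$1$ bound. For each such class I run Algorithm~\ref{alg:Type1_points}: form the hyperplane $S$ cut out by the kernel of $H^{(1/p)}$, intersect with $X=V(F,G)$, and collect the points over $\F_{5^e}$ for $1\le e\le 6$. A class attains the bound precisely when this intersection consists of six distinct geometric points, i.e.\ when the proper intersection $X\cap S$ is reduced; tallying these classes yields exactly $17$.

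The main obstacle is not conceptual but lies in the reducedness bookkeeping of the second part: one must distinguish ``six points counted with multiplicity'' (always true for $a=3$) from ``six distinct points'', which requires verifying that $X\cap S$ carries no point of higher multiplicity and no inseparability over $\overline{\F}_5$. In practice this is handled by comparing, for each candidate curve, the number of geometric points returned over the relevant extensions against the degree $2g-2=6$, and confirming that the degree distributions realize partitions of $6$ into distinct closed points. Care must also be taken to perform the tally on $\F_5$-isomorphism classes rather than on raw tuples, so that the figure $17$ is genuinely up to isomorphism.
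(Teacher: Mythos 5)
Your proposal is correct and follows essentially the same route as the paper: both arguments reduce the statement to the exhaustive classification of the $134$ isomorphism classes (Corollary~\ref{cor:exhaustive_search_p5}), compute the Cartier points via Algorithms~\ref{alg:Type1_points} and~\ref{alg:Type2_points}, invoke Example~\ref{ex:T2CPs_p5} to conclude that the five classes with a Type~$2$ point have too few Type~$1$ points to reach seven, and tally the seventeen $p$-rank~$0$ classes attaining six distinct Type~$1$ points. Your added remarks (uniqueness of the scalar $c$, hence disjointness of the two types, and the distinction between six points with multiplicity and six distinct points, implicitly correcting the statement's ``$a$-number $1$'' to ``$a$-number $3$'') only make explicit bookkeeping that the paper leaves tacit.
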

	\begin{proof}
		
		If a genus $4$ curve with $a=3$ has $p$-rank $0$ then by Proposition~\ref{prop:bound_cartier_points}, it has at most six Cartier points. If the $p$-rank is $1$, then the curve has at most seven Cartier points: six of Type $1$ and one of Type $2$. We use Algorithms~\ref{alg:Type1_points} and~\ref{alg:Type2_points} to compute the Cartier points on all N1, N2 and D curves with $a=3$ over $\F_5$ and find that only seventeen curves with $p$-rank $0$ have a total of six Cartier points. Example~\ref{ex:T2CPs_p5} shows that the only curves with Type $2$ Cartier points have fewer than six Type $1$ points, hence the total of Cartier points is less than the upper bound for all the $p$-rank $1$ curves. 
	\end{proof}

	\subsection{Case $p=7$}

	As mentioned in Section~\ref{sec:goodcurvesp=7}, for $p=7$ we also computed all the $a$-number $3$ curves of type D and of type N1i with $(b_1,b_2)=(0,0)$. Table~\ref{table:p=7GoodCurves_number_of_CPs} shows the number of Cartier points on these curves.

	\begin{table}[h]  
		\caption{Number of Cartier points on known D and N1 genus $4$ curves over $\F_7$.}
		\label{table:p=7GoodCurves_number_of_CPs}
		\begin{center}
			\begin{tabular}{c|cccccc|c}
				
				Case	& \multicolumn{6}{c|}{\#Curves with $n$ Type $1$ points} & \#Curves with Type $2$  \\ 
				
				& $1$ &2  &3  &4  & 5 & 6 &1  \\ 
				\hline 	\hline 
				D	& 0& 0 &3  & 9 &5  &  48& 0 \\ 
				
				N1i'& 0 & $1$ &0 & 3 &2  &23  & $1$ \\ 
				
				\hline 
				Total&0&1&3&12&7&71&1\\
				\hline
			\end{tabular} 
		\end{center}
	\end{table}

	\section{Examples}\label{sec:examples}
	
	In this section we show five  examples where we explicitly compute all the Cartier points of curves (or families of curves) with genus $4$. 
	\begin{example}\label{ex:CPbound_reached_p7_2}
		Let $X=V(F,G)$ be a genus $4$ curve over $\F_7$ where 
		\begin{align*}
		F&=2yz+2xw,\\
		G&=2x^2y + y^3 + x^2z + y^2z + 3z^3 + 2yzw + z^2w + 4yw^2 + 6zw^2 + 4w^3.
		\end{align*}
		Notice that $X$ belongs to the N1 case from Definition~\ref{def:standardform}. We will see how this curves attains Baker's bound on Cartier points. First, the Hasse--Witt matrix of $X$ is 
		\begin{align*}
		\begin{bmatrix}
		2&3&6&1\\
		0&0&0&0\\
		0&0&0&0\\
		0&0&0&0\\
		\end{bmatrix}.
		\end{align*}
		Thus the $a$-number is $3$ and the $p$-rank $1$, in which case the total bound on the number of Cartier point is $7$. By solving $H\bv=c\bv$ we see that the Type $2$ point is $[1:0:0:0]$.
		
		Also, the Type $1$ points are those in the intersection of $X$ and the hyperplane $2x+3y+6z+w=0$. There are two such points defined over $\F_7$, they are $[0 : 0 : 1 : 1]$ and $[2 : 4 : 3 : 1]$. The other four points are two pairs of $\sigma$-conjugate defined over $\F_{49}$ which are 
		$[6 : \alpha^{26} : \alpha^{22} : 1], [6 :  \alpha^{38} : \alpha^{10} : 1]$ and 
		$[4 :  \alpha^{33 }: \alpha^{23} : 1], [4 :  \alpha^{39 }:  \alpha^{17 }: 1]$, for $\alpha$ such that $\alpha^2 + 6\alpha + 3=0$.
	\end{example}

	\begin{example}\label{ex:p7_6T1_points_2}
		
		Let $X=V(F,G)$ be a genus $4$ curve over $\F_7$ where 
		\begin{align*}
		F&=	y^2 + 4z^2 + 2xw.\\
		G&=	3x^2y + 4xy^2 + 3y^3 + 5y^2z + 2xz^2 + yz^2 + 5z^3 + y^2w + 6yzw + 5zw^2 + 2w^3.
		\end{align*}
		The Hasse--Witt matrix of $X$ is 
		\begin{align*}
		H=\begin{bmatrix}
		1 &4 &1 &6\\
		5& 6& 5& 2\\
		0& 0& 0& 0\\
		1& 4& 1& 6\\
		\end{bmatrix}.
		\end{align*}
		In this case, $X$ has $p$-rank $1$, but the only eigenvector of $H$, up to scaling is $(1, 5, 0, 1)^T$, and this does not give a  point on $X$. On the other hand, there are four Type $1$ points: $[0 : \alpha^{14} : \alpha^{34 }: 1]$,$ [\alpha^7 : \alpha^{36 }: \alpha^{34} : 1]$, $[0 : \alpha^2 : \alpha^{46} : 1]$,	$[\alpha : \alpha^{12} : \alpha^{46} : 1]$, where $\alpha^2 + 6\alpha + 3=0$. The first two have multiplicity one and the others have multiplicity 2. 
	\end{example}

	\begin{example}\label{ex:CP_on_p11_curve}
		Let $X=V(F,G)$ be a genus $4$ curve over $\F_{11}$ where 
		\begin{align*}
		F=&z^2 + 2yw,\\
		G=&9x^3 + xy^2 + 4y^3 + 9xyz + 2xz^2 + 8z^3 + 7xzw + 8xw^2 + zw^2 + 3w^3.
		\end{align*}
		The Hasse--Witt matrix of $X$ is 
		\begin{align*}
		H=\begin{bmatrix}
		1 & 5&  3& 10\\
		7 & 2& 10&  4\\
		2 &10 & 6 & 9\\
		4 & 9 & 1&  7
		\end{bmatrix}.  
		\end{align*}
		
		This curve has $a$-number $3$ and $p$-rank $1$. There are no Type $2$ points. Indeed, the only solution, up to scalar multiplication of $H\bv=c\bv$ is $\bv=(1, 7, 2, 4)^T$, and this does not give a point over $X$. 
		
		The Type $1$ points are those on the intersection of $X$ and the hyperplane with equation $x+5y+3z+10w=0$, that is $  [6: 9: 9: 1]$,
		$[ 7  +2\alpha  +7\alpha^2  +9\alpha^3 +10\alpha^4: 6  +4\alpha  +5\alpha^2  +4\alpha^3 +1\alpha^4:10   +4\alpha^2  +5\alpha^3 +6\alpha^4:1]$ with $\alpha^5+10\alpha^2+9=0$, and its four conjugate points.

	\end{example}

	\begin{example} \label{ex:GCd5-301_a4=0}
		
		Let $X$ be the genus $4$ curve over $\F_{5}$ embedded in $\PP^3$  as the zero locus of 
		\begin{equation} \label{eq:example_GCd5-301_a4=0}
		F=2yw+z^2, \text{       }
		G=a x^3+xw^2 +b y^3+c w^3+zw^2,
		\end{equation}
		with $a, b \in\F_{5}^\times$ and $c \in \F_{5}$. 
		By Proposition~\ref{prop:computation_hw}  the Hasse-Witt matrix of $X$ is
		\begin{equation}\label{eq:HWmatrix_ex_GCd5-301_a4=0}
		H= \begin{bmatrix}
		0& 0& 0 &0\\
		0&3ab^2& 4ab & 4a\\
		0& 0 & 0 & 0\\
		0& 0& 0& 0\\	 
		\end{bmatrix}.
		\end{equation}
		If $H\bv=c\bv$ for some $c\ne 0$, then $\bv$ must be in the subspace spanned by $(0,1,0,0)^T$. But $[0:1:0:0]$ is not a point on $X$. Therefore, $X$ has no Type $2$ point.
		
		Now, suppose $P=[x_0,y_0,z_0,w_0]$ is a Type $1$ Cartier point of $X$, and let $\bv=\bv_P$. Since $P$ is also on the hyperplane generated by the kernel of $H$, then $3b^2y_0+4bz_0+4w_0=0$, from where $w_0=3b^2y_0+4bz_0$. By evaluating $F$ at $P$ we see that 
		\begin{equation*}
		b^2y^2+3byz+z^2=0,
		\end{equation*}
		hence $z_0=2by_0$. Note that $y_0\ne 0$, because otherwise $P=[1:0:0:0]$, and this is not a point on $X$. So we can assume that $y_0=1$, and then $P=[x_0:1:2b:b^2]$. Evaluating $P$ at $G$ we get that
		\begin{equation*}
		ax_0^3+x_0+3b+cb=0.
		\end{equation*}
		
		Therefore, $X$ has $1, 2$ or $3$ Type $1$ Cartier points (counting without multiplicity), one for each of the roots of $ax^3+x+3b+cb$, that can be defined over $\F_5$, $\F_{5^2}$ or $\F_{5^3}$.

	\end{example}

	\begin{example} \label{ex:GCd5-301_a4}
		
		Let $X$ be the genus $4$ curve over ${\F}_{5}$ embedded in $\PP^3$  as the zero locus of 
		\begin{equation} \label{eq:example_GCd5-301_a4}
		F=2yw+z^2, \text{       }
		G=3a^2 x^3+xw^2 +bxyz+b^2a y^3+a w^3+zw^2,
		\end{equation}
		with $a, b \in\F_{5}^\times$. The curve $X$ is smooth and irreducible if $a\ne b$. By Proposition~\ref{prop:computation_hw}  the Hasse-Witt matrix of $X$ is
		\begin{equation}\label{eq:HWmatrix_ex_GCd5-301_a4}
		H= \begin{bmatrix}
		0& 0& 4ab^3 &4b\\
		0& 0 & 2a^3b^2  &2a^2 \\
		0& 0 & 0 & 0\\
		0& 0& 4b^3& 4a^3b\\	 
		\end{bmatrix}.
		\end{equation}
		So $X$ has $a$-number and $p$-rank $1$. There are no points on $X$ such that the corresponding vector $\bv$ is a solution to $H\bv^{(5)}=c\bv$ for $c$ in ${\F}_5^{\times}$. 
		
		Now, suppose that $P=[x_0,y_0,z_0,w_0]$ is a Type $1$ point. Then $P$ is in the hyperplane given by $ab^2z+w=0$, hence $w_0=4ab^2z$. If $z_0=0$, then $w_0=0$ and since $G(P)=0$, we have $3a^2x_0^3+ab^2y_0^3=0$. Clearly $x_0$ and $y_0$ cannot be zero, so let $y_0=1$ and then $P=[x_0:1:0:0]$ for $x_0$ a root of $3ax^3+b^2=0$. This gives a Type 1 point over $\F_5$ and two over $\F_{25}$. 
		
		On the other hand, if we assume $z_0=1$ then $w_0=4ab^2$ and by substituting $P$ in $F=0$ we obtain $y_0=3a^3b^2$. Therefore $P=[x_0:3a^3b^2:1:4ab^2]$, with $x_0$ a root of $3a^2x^3+x(a^2+3a^3b^3)+3a^2+4b^2$. For all possible $(a,b)$, this polynomial has three distinct roots, either over $\F_5$, $\F_{25}$ or $\F_{125}$.

	\end{example}


	\bibliographystyle{apa} 
	\bibliography{Cartier_points_article}

\end{document}